\newenvironment{proof}{\vspace{1ex}\noindent{\bf Proof:}}{\hspace*{\fill}$\blacksquare$\vspace{1ex}}
\newenvironment{proofof}[1]{\vspace{1ex}\noindent{\bf Proof of #1:}}{\hspace*{\fill}$\blacksquare$\vspace{1ex}}
\newtheorem{theorem}{Theorem}
\newtheorem{lemma} [theorem] {Lemma}
\newtheorem{definition} [theorem] {Definition}
\newtheorem{remark} [theorem] {Remark}
\newcommand{\eR}[0]{\ensuremath{ \mathbb R}}
\newcommand{\By}[2]{\overset{\mbox{\tiny{#1}}}{#2}}
\newcommand{\upd}{\mathrm{d}}
\newcommand{\upY}{\mathrm{Y}}
\title{Supporting document to the paper ``Logical limit laws for minor-closed classes of graphs"}
\author{Peter Heinig\thanks{Hamburg University of Technology, Hamburg, Germany. E-mail: \texttt{heinig@ma.tum.de}.
The author gratefully acknowledges the support of TUM Graduate School's Thematic Graduate Center TopMath
at \newline {Technische Universit\"at M\"unchen}.}
\and
Tobias M\"uller\thanks{Utrecht University, Utrecht, the Netherlands. E-mail: \texttt{t.muller@uu.nl}.}
\and
Anusch Taraz\thanks{Hamburg University of Technology, Hamburg, Germany. E-mail: \texttt{taraz@tuhh.de}.
The author was supported in part by DFG grant TA 319/2-2.}
}
\begin{document}

\maketitle

Here we provide a hand-checkable proof for Lemma 4.9 in the paper~\cite{ourpaper}.

\begin{definition}[{$B_0$, $B_2$; cf. \cite[p.~327]{GimenezNoy}}]
\label{r54trerewr5w4ytrere5t4rw43re}
We have to work with the following functions: 
\begin{enumerate}[label={\rm(\arabic{*})}]
\item\label{r435terew534w254fe43rew}
$B_0$ $=$ 
$\tfrac{(3t-1)^2(t+1)^6\log(t+1)}{512t^6}$ 
$-$ $\tfrac{(3t^4 - 16t^3 + 6t^2 -1)\log(3t+1)}{32 t^3}$ 
$-$ $\tfrac{(3t+1)^2(-t+1)^6 \log(2t+1)}{1024t^6}$ \\ 
$+$ $\tfrac14\log(t+3) - \tfrac12\log(t) - \tfrac38\log(16)$ 
$-$ $\tfrac{(217t^6 + 920t^5 + 972t^4 + 1436t^3 + 205t^2 - 172t + 6)(-t+1)^2}
{2048 t^4(3t+1)(t+3)}$ \quad , 
\item\label{dfg54ter54trdr4e5wdfsew43re}
$B_2$ $=$ $\tfrac{(-t+1)^3(3t-1)(3t+1)(t+1)^3\log(t+1)}{256 t^6}$ 
     $-$ $\tfrac{(-t+1)^3(3t+1)\log(3t+1)}{32t^3}$ 
     $+$ $\tfrac{(3t+1)^2(-t+1)^6\log(2t+1)}{512t^6}$ \\
     $+$ $\tfrac{(t-1)^4(185t^4+698t^3-217t^2-160t+6)}{1024 t^4(3t+1)(t+3)}$ \quad . 
\end{enumerate}
\end{definition}

\begin{definition}[{$h_1(t)$, $h_2(t)$}]
\label{fdtr56y4tr4543243ewrs}
For every $t\in (0,1)$ we define 
\begin{enumerate}[label={\rm(\arabic{*})}]
\item\label{r65ew54te32545354re43ew}
$h_1(t):=\tfrac{2t+1}{(3t+1)(-t+1)}$\quad , 
\item\label{5654tr543retr42rew52r4ewds}
$h_2(t):=-\tfrac{t^2 (-t+1) \left(5 t^2 + 36 t + 18\right)}{2(t+3)(2t+1)(3t+1)^2}$ \quad .
\end{enumerate}
\end{definition}

\begin{definition}[{$Y(t)$; cf. \cite[p.~310]{GimenezNoy}}]
\label{ret45terdfsre4t54tere4r3r4eq43eqww43rewds} 
For every $t\in (0,1)$, and with $h_1$ and $h_2$ 
as in Definition~\ref{fdtr56y4tr4543243ewrs}, 
we define $Y(t) := -1 + h_1(t)\ \exp(h_2(t))$. 
\end{definition}

\begin{lemma}\label{frete54trter46trer5tewre5te5454terfd45etrd}
The function $t\mapsto Y(t)$ is strictly monotone increasing in the open interval $(0,1)$. 
\end{lemma}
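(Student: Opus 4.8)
The plan is to reduce the monotonicity of $Y$ to a single one–variable polynomial inequality on $(0,1)$ and then to certify that inequality by elementary means. \textbf{Step 1 (reduction to the sign of a derivative).} Since $2t+1$, $3t+1$ and $1-t$ are all positive on $(0,1)$, the function $h_1$ is positive there, so $Y(t)+1=h_1(t)\exp(h_2(t))>0$ on $(0,1)$; hence $\log(Y(t)+1)=\log h_1(t)+h_2(t)$ is well defined and differentiable, and
\[ Y'(t)=\bigl(Y(t)+1\bigr)\Bigl(\tfrac{h_1'(t)}{h_1(t)}+h_2'(t)\Bigr), \]
so $Y'(t)$ has the same sign as $G(t):=(\log h_1)'(t)+h_2'(t)$. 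It therefore suffices to prove $G(t)>0$ for all $t\in(0,1)$.

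\textbf{Step 2 (the two summands; clearing denominators).} Logarithmic differentiation of $h_1$ and a common-denominator computation yield
\[ (\log h_1)'(t)=\frac{2}{2t+1}-\frac{3}{3t+1}+\frac{1}{1-t}=\frac{6t(t+1)}{(1-t)(2t+1)(3t+1)}, \]
which is positive on $(0,1)$; in particular $G(t)>0$ automatically wherever $h_2'(t)\ge 0$, so only the region where $h_2$ is decreasing is genuinely at issue. Writing $N(t):=t^2(1-t)(5t^2+36t+18)$ and $D(t):=2(t+3)(2t+1)(3t+1)^2$, we have $h_2=-N/D$ and $h_2'(t)=\bigl(N(t)D'(t)-N'(t)D(t)\bigr)/D(t)^2=:\Phi(t)/D(t)^2$ with $\Phi$ an explicit polynomial. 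As $D(t)>0$ on $(0,1)$, multiplying $G(t)$ by the positive quantity $4(1-t)(t+3)^2(2t+1)^2(3t+1)^4$ and using the displayed form of $(\log h_1)'$ converts $G(t)>0$ into the polynomial inequality
\[ \widetilde R(t):=24\,t(t+1)(t+3)^2(2t+1)(3t+1)^3+(1-t)\,\Phi(t)>0,\qquad t\in(0,1), \]
where $\widetilde R$ has degree $9$.

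\textbf{Step 3 (the double root at $0$).} Expanding $(\log h_1)'(t)=6t-18t^2+O(t^3)$ and, from the power series of $N/D$, $h_2'(t)=-6t+66t^2+O(t^3)$, one gets $G(t)=48t^2+O(t^3)$ near $t=0$. Hence $\widetilde R$ is divisible by $t^2$: write $\widetilde R(t)=t^2\widehat R(t)$ with $\widehat R$ of degree $7$; one checks $\widehat R(0)=1728>0$ and $\widehat R(1)=\widetilde R(1)>0$. The lemma is now equivalent to the assertion that $\widehat R(t)>0$ on $[0,1]$.

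\textbf{Step 4 (positivity of $\widehat R$ --- the main obstacle).} This last step is the only real work. I would write $\widehat R$ in the Bernstein basis of its degree, $\widehat R(t)=\sum_{i=0}^{7}b_i\,t^i(1-t)^{7-i}$, and verify that all $b_i\ge 0$ with at least one $b_i>0$; since $t,1-t>0$ on $(0,1)$ this yields $\widehat R>0$ at once. Should some $b_i$ come out negative, I would degree-raise --- replace $\widehat R$ by $\bigl(t+(1-t)\bigr)^m\widehat R$ for a small $m$ and recompute --- which by P\'olya's positivity theorem must eventually produce an all-nonnegative coefficient vector; alternatively, a Sturm-sequence computation certifies that $\widehat R$ has no zero in $(0,1)$, and one interior evaluation then fixes the sign. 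The feature to watch throughout is the cancellation near $t=0$: $(\log h_1)'$ and $h_2'$ agree to first order there, leaving only the quadratic margin $48t^2$ in $G$, which is exactly what produces the double factor $t^2$ in $\widetilde R$ and means that the positivity of $\widehat R$ near $0$ must be read off from its (positive) constant term rather than from any crude estimate.
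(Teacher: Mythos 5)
Your Steps 1--3 are correct and are in substance the same reduction the paper makes: since $Y+1=h_1e^{h_2}>0$ on $(0,1)$, the sign of $Y'$ is that of $(\log h_1)'+h_2'$, and clearing the (positive) denominators turns the claim into the positivity of a single polynomial on $(0,1)$. Your intermediate data check out against the paper's explicit formula for $Y'$, whose numerator is $3t^2\,(144+736t+1256t^2+799t^3+141t^4+t^5-5t^6)$ over $(2t+1)(3t+1)^4(t^2+2t-3)^2$: your $\widehat R$ is exactly $12(3t+1)$ times the degree-$6$ factor, which is consistent with your expansion $G(t)=48t^2+O(t^3)$ and with $\widehat R(0)=1728$.

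The genuine gap is Step 4, which you yourself call the main obstacle and then do not carry out. You never compute $\Phi$, $\widetilde R$ or $\widehat R$ explicitly, and you produce no certificate of positivity: the Bernstein-coefficient check is proposed conditionally (``should some $b_i$ come out negative, I would degree-raise\dots alternatively, a Sturm-sequence computation\dots''), and the only facts actually verified --- $\widehat R(0)>0$, $\widehat R(1)>0$ and the local behaviour at $0$ --- do not exclude a zero in the interior of $(0,1)$. So as written this is a proof plan whose decisive step is an unexecuted computation. The paper's treatment shows, moreover, that once the derivative is computed in factored form the heavy machinery you hold in reserve is unnecessary: positivity of the numerator polynomial is immediate because its only negative term is $-5t^6$, and $5t^6<144$ for all $t\in(0,1)$. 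To complete your argument you would need to exhibit $\widehat R$ (or the paper's degree-$6$ factor) explicitly and make this one-line observation, or actually display the nonnegative Bernstein/P\'olya coefficients or the Sturm-sequence count.
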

\begin{proof}
The derivative of $Y$ is 
\begin{equation}\label{re654terwer54ttre54trert54rte54trw543we}
\tfrac{\upd}{\upd t} Y(t) = 
\tfrac{3 t^2 (144 + 736 t + 1256 t^2 + 799 t^3 + 141 t^4 + t^5 - 5 t^6)}
{(2t+1)(3t+1)^4(t^2+2t-3)^2}\ \exp\left(
-\tfrac{t^2 (-t+1) \left(5 t^2 + 36 t + 18\right)}{2(t+3)(2t+1)(3t+1)^2}\right) \quad . 
\end{equation}
The exponential function being a strictly positive real number for any real argument, 
\eqref{re654terwer54ttre54trert54rte54trw543we} implies
\begin{equation}\label{tre54ytrrte546657rter645rtee54tr}
t>0 \quad \mathrm{and}\quad \tfrac{\upd}{\upd t} Y(t) > 0 
\qquad\Leftrightarrow\qquad 
5 t^6 <  t^5 + 141 t^4 + 799 t^3 + 1256 t^2 + 736 t + 144\quad , 
\end{equation}
the latter of which is obviously true since already $5t^6 < 144$ for every $0<t<1$. 
\end{proof}

\begin{lemma}\label{fdtr5eyrr5tytrew54ter456tr54r}
With $h_1$ and $h_2$ as in Definition~\ref{fdtr56y4tr4543243ewrs} we have 
\begin{enumerate}[label={\rm(\arabic{*})}]
\item\label{wer5434564et3564ter5} 
$2.0941746325 - 10^{-10} < h_1(0.6263716633 - 10^{-10}) < 2.0941746325 + 10^{-10}$ \quad , 
\item\label{re54tert54re4re24ew4e} 
$2.0941746335 - 10^{-10} < h_1(0.6263716633 + 10^{-10}) < 2.0941746335 + 10^{-10}$ \quad , 
\item\label{fdgret56e4trwe4r3ewe43rew453r4er54ed} 
$-0.0460123254 - 10^{-10} < h_2(0.6263716633 - 10^{-10}) < -0.0460123254 + 10^{-10}$ \quad , 
\item\label{ret54ytretr6yt5ryet564t564rt6754tr}
$-0.0460123253 - 10^{-10} < h_2(0.6263716633 + 10^{-10}) < -0.0460123253 + 10^{-10}$ \quad . 
\end{enumerate}
\end{lemma}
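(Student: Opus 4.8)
The plan is to reduce each of the four assertions to a finite exact computation with integers, exploiting that the three arguments appearing in the statement are rational numbers with denominator $10^{10}$. Indeed $t^{-}:=0.6263716633-10^{-10}=\tfrac{6263716632}{10^{10}}$ and $t^{+}:=0.6263716633+10^{-10}=\tfrac{6263716634}{10^{10}}$ (while the central point is $t_{1}:=0.6263716633=\tfrac{6263716633}{10^{10}}$), and $0<t^{-}<t^{+}<1$. Hence every factor occurring in a denominator of $h_{1}$ or $h_{2}$ (Definition~\ref{fdtr56y4tr4543243ewrs}) --- namely $3t+1$ and $-t+1$ for $h_{1}$, and $t+3$, $2t+1$, $3t+1$ for $h_{2}$ --- is a strictly positive rational at $t=t^{\pm}$, so $h_{1}(t^{\pm})$ and $h_{2}(t^{\pm})$ are well-defined rationals. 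Substituting $t=q/10^{10}$ into the defining formulas and clearing powers of $10$ writes each of these four numbers as $N/D$ with $N,D\in\Z$ and $D>0$: for $h_{1}$ this gives the identity $h_{1}(q/10^{10})=\tfrac{(2q+10^{10})\,10^{10}}{(3q+10^{10})(10^{10}-q)}$, a ratio of integers about twenty digits long, and for $h_{2}$ the analogous (somewhat larger) ratio built from the numerator $-t^{2}(-t+1)(5t^{2}+36t+18)$ and the denominator $2(t+3)(2t+1)(3t+1)^{2}$.

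Next, each of \ref{wer5434564et3564ter5}--\ref{ret54ytretr6yt5ryet564t564rt6754tr} has the shape $\tfrac{a-1}{10^{10}}<h_{i}(t^{\pm})<\tfrac{a+1}{10^{10}}$ for an explicit integer $a$ --- for instance $a=20941746325$ in \ref{wer5434564et3564ter5} (because $2.0941746325\pm10^{-10}=\tfrac{20941746325\pm1}{10^{10}}$) and $a=-460123254$ in \ref{fdgret56e4trwe4r3ewe43rew453r4er54ed}. Writing the value in question as $N/D$ with $D>0$ as above, this two-sided bound is \emph{equivalent} to the two integer inequalities $(a-1)\,D<10^{10}N$ and $10^{10}N<(a+1)\,D$. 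Thus the whole lemma amounts to eight inequalities between explicit integers (at most around fifty digits long, the longer ones coming from $h_{2}$), each decided by one integer multiplication and one comparison; carrying out these eight checks completes the proof. This route uses no monotonicity or other analytic information about $h_{1}$ or $h_{2}$.

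I expect the only real obstacle to be precision bookkeeping rather than ideas: the target intervals have width merely $2\cdot10^{-10}$, so the arithmetic genuinely must be exact --- a ten-digit floating-point evaluation would be inconclusive --- which is why the integers above are sizeable. If one prefers to keep them smaller, an alternative is to evaluate $h_{1}$ and $h_{2}$ once at the central rational $t_{1}$ exactly and then bound $h_{i}(t^{\pm})=h_{i}(t_{1})\pm h_{i}'(\xi)\cdot10^{-10}$ (some $\xi$ between $t_{1}$ and $t^{\pm}$) by the mean value theorem, using an enclosure for $h_{i}'$ over the tiny interval $[t^{-},t^{+}]$; for $h_{1}$ this is painless since $\tfrac{\upd}{\upd t}h_{1}(t)=\tfrac{6t(t+1)}{\bigl((3t+1)(-t+1)\bigr)^{2}}$ (an elementary differentiation) is positive on $(0,1)$ and varies only negligibly across $[t^{-},t^{+}]$ --- in particular the value in \ref{wer5434564et3564ter5} is strictly below the one in \ref{re54tert54re4re24ew4e}, consistent with the displayed numbers --- and a similarly crude bound on $h_{2}'$, or on a truncated rational approximation of it, handles \ref{fdgret56e4trwe4r3ewe43rew453r4er54ed} and \ref{ret54ytretr6yt5ryet564t564rt6754tr}. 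Either way the lemma reduces to a finite, elementary, hand-checkable calculation whose only pitfall is rounding.
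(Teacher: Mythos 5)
Your proposal is correct and follows essentially the same route as the paper: since $h_1$ and $h_2$ are rational functions evaluated at rationals with denominator $10^{10}$, each bound reduces, after cross-multiplication, to a finite exact comparison of explicit integers, which is exactly the verification the paper delegates to exact arbitrary-precision (computer) arithmetic. Your explicit reduction to the eight integer inequalities just spells out what the paper leaves as a routine check.
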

\begin{proof}
Checking these statements is left to the reader, who is advised to entrust 
\emph{this} entirely routine task to an electronic computer. 
The functions $h_1$ and $h_2$ being rational, the statements can be checked 
via exact computations with arbitrary long integers, 
a standard functionality of several computer algebra systems (note that to check 
\ref{fdgret56e4trwe4r3ewe43rew453r4er54ed} and \ref{ret54ytretr6yt5ryet564t564rt6754tr} 
one of course does not have to compute fractions, but one can rewrite 
\ref{fdgret56e4trwe4r3ewe43rew453r4er54ed} and \ref{ret54ytretr6yt5ryet564t564rt6754tr} 
as a statement about adding, subtracting and multiplying integers). 

Let us add that for reaching certainty about 
the equalities \ref{fdgret56e4trwe4r3ewe43rew453r4er54ed} 
and \ref{ret54ytretr6yt5ryet564t564rt6754tr}, the closest 
non-commercial automated alternatives to hand-evaluation 
seem to be some \texttt{C} libraries for arbitrary precision 
arithmetic, like \texttt{GMP} or \texttt{iRRAM}. 
According to \cite{MUELLERUHRHANsomestepsintoverificationofexactrealarithmetic}, 
the code in the \texttt{iRRAM} package itself is currently in the process of 
being formally verfied.  
\end{proof}

We now derive Taylor polynomials taylormade for our purposes 
(the approximation in \ref{w54tryfe54t565tr54trd} is designed to be used twice: 
both for the evaluations of $\exp$ within $\upY$, 
and later on for evaluations $\exp(-\tilde{\nu})$ with $\tilde{\nu}$ an approximation of $\nu$): 
\begin{lemma}[{some Taylor approximations to $\exp$}]
\label{rw5t4rswre54trt546tr54trde}
{\color{white} We have:}
\begin{enumerate}[label={\rm(\Roman{*})}]
\item\label{dft546rte54ee54564544}
for every $x\in(0.48,0.49)$, 
\begin{enumerate}[label={\rm(\arabic{*})}]
\item\label{rte5645trw54654ert5465465} 
$\left\lvert\exp(x) - \sum_{0\leq i\leq 11} \tfrac{x^i}{i!}\right\rvert
 < 0.11998784433\cdot 10^{-11}$ 
\item\label{fter46tyr54trer54tr54tr54ter} 
$0.39995948109\cdot 10^{-12} + \sum_{0\leq i\leq 11} \tfrac{x^i}{i!}
 < 
\exp(x)
 < 
0.11998784433\cdot 10^{-11} + \sum_{0\leq i\leq 11} \tfrac{x^i}{i!}$ 
\end{enumerate} 
\item\label{w54tryfe54t565tr54trd}
for every $x\in(-0.05,0)$, 
\begin{enumerate}[label={\rm(\arabic{*})}]
\item\label{ret54etrwe54rew453re3red} 
$\left\lvert\exp(x) - \sum_{0\leq i\leq 5} \tfrac{x^i}{i!}\right\rvert 
< 2.1701388889\cdot 10^{-11}$ 
\item\label{fdgwr54yterew43rew435re543rds} 
$1.0850694444\cdot 10^{-11} + \sum_{0\leq i\leq 5} \tfrac{x^i}{i!}
 < 
\exp(x)
 < 
2.1701388889\cdot 10^{-11} + \sum_{0\leq i\leq 5} \tfrac{x^i}{i!}$ 
\end{enumerate} 
\end{enumerate}
\end{lemma}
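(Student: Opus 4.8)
The plan is to establish each of the four bounds \ref{rte5645trw54654ert5465465}, \ref{fter46tyr54trer54tr54tr54ter}, \ref{ret54etrwe54rew453re3red} and \ref{fdgwr54yterew43rew435re543rds} by the same elementary device: Taylor's theorem with the Lagrange form of the remainder, combined with the fact that $\exp$ is positive and increasing, so that the remainder term can be bounded by a single evaluation of $\exp$ at the right endpoint of the interval. Concretely, for the degree-$n$ Taylor polynomial of $\exp$ at $0$, Taylor's theorem gives, for each $x$ in the relevant interval, a point $\xi$ strictly between $0$ and $x$ with $\exp(x) - \sum_{0\leq i\leq n}\tfrac{x^i}{i!} = \tfrac{\exp(\xi)}{(n+1)!}x^{n+1}$. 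The two halves of the excerpt correspond to $(n,I) = (11,(0.48,0.49))$ and $(n,I) = (5,(-0.05,0))$.

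First I would treat part \ref{dft546rte54ee54564544}. Here $0<x<0.49$ forces $0<\xi<x<0.49$, hence $1 < \exp(\xi) < \exp(0.49) < \exp(1/2) < \sqrt{e} < 1.6488$; a cruder but fully hand-checkable bound such as $\exp(\xi) < 1.65$ suffices. Also $0 < x^{12} < 0.49^{12}$. Therefore the remainder $R_{11}(x) := \exp(x) - \sum_{0\leq i\leq 11}\tfrac{x^i}{i!}$ is strictly positive and satisfies $R_{11}(x) < \tfrac{1.65\cdot 0.49^{12}}{12!}$. The task is then the purely arithmetical one of checking that $\tfrac{1.65\cdot 0.49^{12}}{12!} < 0.11998784433\cdot 10^{-11}$, which is a comparison of two explicit rationals and can be done by cross-multiplying integers. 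This yields \ref{rte5645trw54654ert5465465}. For \ref{fter46tyr54trer54tr54tr54ter}: positivity of $R_{11}$ gives at once the lower estimate on $\exp(x)$ with the slack term $0.39995948109\cdot 10^{-12}$ — indeed one should check $R_{11}(x) > \tfrac{1\cdot 0.48^{12}}{12!}$ using $\exp(\xi)>1$ and $x>0.48$, and then verify $\tfrac{0.48^{12}}{12!} > 0.39995948109\cdot 10^{-12}$; the upper estimate is exactly \ref{rte5645trw54654ert5465465} rewritten.

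Part \ref{w54tryfe54t565tr54trd} is handled the same way but one must be careful with signs because $x<0$. With $n=5$ the remainder is $R_5(x) = \tfrac{\exp(\xi)}{6!}x^{6}$ for some $\xi\in(x,0)\subset(-0.05,0)$; since $x^6>0$ and $\exp(\xi)\in(\exp(-0.05),1)\subset(0.95,1)$, the remainder is again strictly positive, with $R_5(x) < \tfrac{1\cdot 0.05^{6}}{6!} = \tfrac{0.05^6}{720}$ and $R_5(x) > \tfrac{0.95\cdot 0^{6}}{720}$ — the latter being vacuous, so for the lower slack $1.0850694444\cdot 10^{-11}$ one instead notes that on the subinterval the sixth power is bounded below only trivially by $0$, so in fact I would re-examine whether the intended lower bound uses $|x|$ bounded below; since $x$ ranges over the whole open interval $(-0.05,0)$ and $x^6$ can be arbitrarily small, the displayed lower inequality in \ref{fdgwr54yterew43rew435re543rds} must be read as holding in the limiting/uniform sense it is used, and the honest statement to prove is simply $0 < R_5(x) < \tfrac{0.05^6}{720} < 2.1701388889\cdot 10^{-11}$, from which both \ref{ret54etrwe54rew453re3red} and the upper half of \ref{fdgwr54yterew43rew435re543rds} follow; for the stated lower constant one checks $2.1701388889\cdot 10^{-11} = \tfrac{0.05^6}{720}$ and $1.0850694444\cdot 10^{-11} = \tfrac{0.05^6}{1440}$ exactly, so the lower bound is the assertion $R_5(x) > \tfrac12\cdot\tfrac{0.05^6}{720}$, which would need $x$ bounded away from $0$ — hence I expect the intended reading is that these constants are recorded for later use with a specific $x$, and the clean lemma to certify is the remainder sandwich $\exp(x) - \sum_{0\leq i\leq 5}\tfrac{x^i}{i!} \in \bigl(0,\ \tfrac{0.05^6}{720}\bigr]$.

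The main obstacle is not conceptual — Taylor with Lagrange remainder and monotonicity of $\exp$ is completely routine — but bookkeeping: one must pick the crude over-estimates of $\exp$ at the endpoints (e.g.\ $\exp(0.49)<1.65$, $\exp(0)=1$) so that they themselves are hand-certifiable without circularity, and then discharge four explicit rational inequalities of the form $\tfrac{c\cdot r^{k}}{k!} < C\cdot 10^{-j}$ by integer arithmetic. The only genuine care needed is in part \ref{w54tryfe54t565tr54trd} with the lower constants: I would flag that the lower bounds in \ref{fter46tyr54trer54tr54tr54ter} and \ref{fdgwr54yterew43rew435re543rds} are meaningful only when $x$ is additionally bounded away from the endpoint where the relevant power vanishes, and that in the applications (the evaluation of $\exp(h_2(\cdot))$ near $t = 0.6263716633$ via Lemma~\ref{fdtr5eyrr5tytrew54ter456tr54r}, and the later $\exp(-\tilde\nu)$) this is always the case; the clean, unconditionally true statement I would actually prove and cite is the two-sided remainder estimate $0 < \exp(x) - \sum_{0\leq i\leq n}\tfrac{x^i}{i!} < \tfrac{\sup_{I}\exp\cdot(\sup_I |x|)^{n+1}}{(n+1)!}$ for $(n,I)$ as above, and then read off the displayed constants as the evaluations of the right-hand side.
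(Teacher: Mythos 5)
You follow the paper's route exactly: expand $\exp$ at $0$, write the error as the Lagrange remainder $\exp(\xi)\,x^{k}/k!$, bound $\exp(\xi)$ by crude endpoint constants (the paper uses $1<\exp(\xi)<3$ in part \ref{dft546rte54ee54564544} and $\tfrac12<\exp(\xi)<1$ in part \ref{w54tryfe54t565tr54trd}; your $1.65$ and $(0.95,1)$ serve the same purpose), and reduce everything to finitely many rational comparisons. For the two upper bounds, and hence for \ref{rte5645trw54654ert5465465} and \ref{ret54etrwe54rew453re3red}, this works and coincides with the paper's argument.

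The lower bounds are where the difficulty sits, and there your proposal contains one step that fails numerically. You propose to certify the left inequality of \ref{dft546rte54ee54564544}.\ref{fter46tyr54trer54tr54tr54ter} by checking $0.48^{12}/12!>0.39995948109\cdot10^{-12}$; but $0.48^{12}/12!\approx 3.12\cdot 10^{-13}$, whereas $0.39995948109\cdot10^{-12}$ is (up to rounding) $0.49^{12}/12!$, i.e.\ the \emph{supremum} of $x^{12}/12!$ over $(0.48,0.49)$, so the check is false. Nor can it be repaired: for $x$ just above $0.48$ the true remainder is about $3.2\cdot10^{-13}$, so the displayed lower bound in \ref{dft546rte54ee54564544}.\ref{fter46tyr54trer54tr54tr54ter} does not hold uniformly on $(0.48,0.49)$ --- precisely the phenomenon you correctly diagnosed in part \ref{w54tryfe54t565tr54trd}, where you identified the constants as roughly $0.05^{6}/6!$ and $\tfrac12\,0.05^{6}/6!$ and observed that $x^{6}$ is not bounded below on $(-0.05,0)$. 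Your caution there is warranted, and in fact the paper's own proof makes the same endpoint slip in both parts: it deduces the uniform lower bounds from $0.39995948109\cdot10^{-12}<0.49^{12}/12!$ and $1.0850694444\cdot10^{-11}<\tfrac12\,0.05^{6}/6!$, i.e.\ it compares the constants with the remainder bound at the far endpoint rather than with its infimum over the interval. So your treatment of \ref{w54tryfe54t565tr54trd} is more careful than the paper's, and the honest statement is the one you formulate: $0<\exp(x)-\sum_{0\le i\le k-1}x^{i}/i!<\sup_I\exp\cdot(\sup_I\lvert x\rvert)^{k}/k!$, with the stated lower constants re-verified pointwise where the lemma is invoked. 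One caveat on that last step: your claim that the applications are always safely bounded away from the bad endpoint is not quite right either --- at the later evaluation of $\exp(-\tilde\nu)$ with $\tilde\nu\approx 0.0374$ the true remainder is only about $3.8\cdot10^{-12}$, below the stated slack $1.085\cdot10^{-11}$, so that use requires a smaller constant; at $x\approx-0.046$ (the $\exp(h_2)$ evaluations) the stated constant does hold pointwise, but certifying it needs a sharper lower bound on $\exp(\xi)$ (e.g.\ $\exp(\xi)>\exp(-0.05)>0.95$) than the crude $\tfrac12$.
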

\begin{proof}
As to \ref{dft546rte54ee54564544}, 
we develop $\exp$ around\footnote{If we would develop $\exp$ around a rational 
number $x_0$ inside the interval we are interested in, we'd need fewer than eleven 
terms to achieve the desired accuracy (w.r.t. arithmetic with arbitrary elements of $\eR$). 
But we would then stray from our path to a set of `certificates' for 
the $p_i$-inequalities consisting of rational computations only: 
Taylor's theorem would require us to know $\exp(x_0)$ in order to 
compute the coefficients of the approximating polynomial. Since $\exp(x_0)$ 
is irrational for every rational $x_0$ (e.g., \cite{MR1641996}), another 
approximation would be necessary, resulting in additional complexity 
outweighing the gain in simplicity due to a lower-degree polynomial. Same for 
developing around an irrational number of the form $\log(x_0)$ with rational $x_0$ 
inside the respective intervals (which would keep the constant term rational yet 
necessitate approximations for what value to substitute into the variable). 
So developing around $0$ seems the only sensible choice for our purposes of 
deriving rational certificates. The price of the ease of evaluating the constant 
term $\exp(0)$ is a higher number of terms in order to `bend' the Taylor polynomial 
to within the required accuracy at points far from $0$.} $0$ and use Lagrange's error 
term for Taylor's theorem: for every $k$ and every $x\in(0,0.49)$ 
there exists $\xi_x\in(0,0.49)$ 
such that $\exp(x)=\sum_{0\leq i\leq k-1}\tfrac{x^i}{i!}+\tfrac{\exp(\xi_x)}{k!}x^k$. 
Because of $1 = \exp(0) < \exp(\xi_x) < \exp(0.49) < \exp(1) < 3$, we therefore know 
\begin{equation}\label{ret54rtrw5rw54tere54ert}
\tfrac{1}{k!} x^k < \exp(x) - \sum_{0\leq i\leq k-1}\tfrac{x^i}{i!} < \tfrac{3}{k!} x^k \quad ,  
\end{equation}
for every $x\in (0,0.49)$. In particular, 
\begin{equation}\label{erw5trefdrwerewrewds}
\left\lvert\exp(x) - \sum_{0\leq i\leq k-1}\tfrac{x^i}{i!}\right\rvert < \tfrac{3}{k!} x^k
\quad\text{for every $x\in (0,0.49)$} \quad .
\end{equation}

As for \ref{dft546rte54ee54564544}, we require $k$ to be large enough 
to have $\tfrac{3}{k!}x^k < 10^{-11}$ 
for every $x\in (0.48,0.49)$ $\subseteq$ $(0,0.49)$, i.e., we require $k$ 
to satisfy $\tfrac{3}{k!} 0.49^k < 10^{-11}$. The smallest such $k$ is $k=12$.  
Since $\tfrac{3}{12!} 0.49^{12}$ $<$ $0.11998784433\cdot 10^{-11}$ 
and $0.39995948109\cdot 10^{-12}$ $<$ $\tfrac{1}{12!} 0.49^{12}$, 
\eqref{ret54rtrw5rw54tere54ert} 
implies \ref{dft546rte54ee54564544}.\ref{fter46tyr54trer54tr54tr54ter}, 
and hence \ref{dft546rte54ee54564544}.\ref{rte5645trw54654ert5465465}. 

As for \ref{w54tryfe54t565tr54trd}, 
for every $x\in(-0.05,0)$, there exists $\xi_x\in(-0.05,0)$ such that 
$\exp(x) = \sum_{0\leq i\leq k-1} \tfrac{x^i}{i!} + \tfrac{\exp(\xi_x)}{k!}x^k$. 
Since $\tfrac12<\exp(-0.05)<\exp(\xi_x)<\exp(0)=1$, we know that for 
every even $k$, and any $x\in (-0.05,0)$ we have $x^k>0$ and 
\begin{equation}\label{6574ertw43e54er}
\tfrac{1}{2k!} x^k < \exp(x) - \sum_{0\leq i\leq k-1}\tfrac{x^i}{i!} 
< \tfrac{1}{k!} x^k \quad , 
\end{equation}
while for every odd $k$ and any $x\in (-0.05,0)$ we have $x^k<0$ and 
\begin{equation}\label{ret6r354t54tr56rte} 
\tfrac{1}{k!} x^k < \exp(x) - \sum_{0\leq i\leq k-1}\tfrac{x^i}{i!} < \tfrac{1}{2k!}x^k\quad . 
\end{equation}

In particular we now know that for every $k$ (of whatever parity) and any $x\in(-0.05,0)$, 
\begin{equation}\label{rew45etrw5354tred5trdscx}
\bigl\lvert\exp(x) - \sum_{0\leq i\leq k-1} \tfrac{x^i}{i!}\bigr\rvert
<
\tfrac{1}{k!} \lvert x\rvert^k \quad . 
\end{equation}
We require $k$ to be large enough to have $\tfrac{1}{k!} \lvert x\rvert^k < 10^{-10}$ 
for every $x\in(-0.05,0)$, i.e., we require $k$ to satisfy 
$\tfrac{1}{k!} 0.05^k < 10^{-10}$. The smallest such $k$ is $k=6$. 
Since $k=6$ is even, \eqref{6574ertw43e54er} together with 
$1.0850694444\cdot 10^{-11} < \tfrac12\tfrac{1}{6!} 0.05^6$ 
and $\tfrac{1}{6!} 0.05^6 < 2.1701388889\cdot 10^{-11}$ imply 
\ref{w54tryfe54t565tr54trd}.\ref{fdgwr54yterew43rew435re543rds}, 
and hence \ref{w54tryfe54t565tr54trd}.\ref{ret54etrwe54rew453re3red}. 
In particular we know that $\sum_{0\leq i\leq 5} \tfrac{x^i}{i!}$ underestimates $\exp(x)$ 
for every $x\in(-0.05,0)$. 
\end{proof}

\begin{lemma}[{verified bounds for $t_0$}]
\label{t54treert4re354tree34terfd}
There exists exactly one real number $t_0\in(0,1)$ with $Y(t_0)=1$, 
and it satisfies 
\begin{equation}\label{fregtyess5rtyewe54retfews5t} 
0.6263716633 - 10^{-10} < t_0 < 0.6263716633 + 10^{-10} \quad . 
\end{equation}
\end{lemma}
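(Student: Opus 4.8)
\emph{Proof plan.} The idea is to trap $t_0$ inside the interval $[t_-,t_+]$, where $t_\pm:=0.6263716633\pm 10^{-10}$, by the intermediate value theorem, and to obtain uniqueness directly from Lemma~\ref{frete54trter46trer5tewre5te5454terfd45etrd}. First observe that $h_1$ and $h_2$ of Definition~\ref{fdtr56y4tr4543243ewrs} are rational functions whose denominators $(3t+1)(-t+1)$ and $2(t+3)(2t+1)(3t+1)^2$ are nonzero throughout $(0,1)$, so $Y(t)=-1+h_1(t)\exp(h_2(t))$ from Definition~\ref{ret45terdfsre4t54tere4r3r4eq43eqww43rewds} is continuous on $(0,1)$, hence on $[t_-,t_+]\subseteq(0,1)$. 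Strict monotonicity of $Y$ on $(0,1)$ already guarantees that $Y-1$ has at most one zero there, so the whole lemma follows once we establish the two numerical inequalities $Y(t_-)<1<Y(t_+)$: the intermediate value theorem then provides a zero $t_0\in(t_-,t_+)$ of $Y-1$, uniqueness is automatic, and the inclusion $t_0\in(t_-,t_+)$ is exactly~\eqref{fregtyess5rtyewe54retfews5t}.

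Both inequalities reduce to exact rational computations via Lemmas~\ref{fdtr5eyrr5tytrew54ter456tr54r} and~\ref{rw5t4rswre54trt546tr54trde}. For $Y(t_-)<1$, i.e.\ $h_1(t_-)\exp(h_2(t_-))<2$: Lemma~\ref{fdtr5eyrr5tytrew54ter456tr54r}\ref{wer5434564et3564ter5} gives $h_1(t_-)<2.0941746326$, and Lemma~\ref{fdtr5eyrr5tytrew54ter456tr54r}\ref{fdgret56e4trwe4r3ewe43rew453r4er54ed} gives $h_2(t_-)<-0.0460123253$, with the full range of $h_2(t_-)$ lying inside $(-0.05,0)$; since $\exp$ is increasing, $\exp(h_2(t_-))<\exp(-0.0460123253)$, which by the upper bound in Lemma~\ref{rw5t4rswre54trt546tr54trde}\ref{w54tryfe54t565tr54trd}.\ref{fdgwr54yterew43rew435re543rds}, applied at the rational point $x=-0.0460123253$, is at most $2.1701388889\cdot 10^{-11}+\sum_{0\le i\le 5}\tfrac{(-0.0460123253)^i}{i!}$. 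Multiplying the two rational upper bounds gives one rational number, and it remains to check --- by the same exact integer arithmetic invoked in Lemma~\ref{fdtr5eyrr5tytrew54ter456tr54r} --- that it is strictly less than $2$. For $Y(t_+)>1$, i.e.\ $h_1(t_+)\exp(h_2(t_+))>2$: Lemma~\ref{fdtr5eyrr5tytrew54ter456tr54r}\ref{re54tert54re4re24ew4e} gives $h_1(t_+)>2.0941746334$ and Lemma~\ref{fdtr5eyrr5tytrew54ter456tr54r}\ref{ret54ytretr6yt5ryet564t564rt6754tr} gives $h_2(t_+)>-0.0460123254\in(-0.05,0)$, whence $\exp(h_2(t_+))>\exp(-0.0460123254)>1.0850694444\cdot 10^{-11}+\sum_{0\le i\le 5}\tfrac{(-0.0460123254)^i}{i!}$ by the lower bound in Lemma~\ref{rw5t4rswre54trt546tr54trde}\ref{w54tryfe54t565tr54trd}.\ref{fdgwr54yterew43rew435re543rds}; one then checks that the product of these two rational lower bounds exceeds $2$.

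The only genuinely delicate point --- and precisely the reason the two auxiliary lemmas were stated to the accuracies they were --- is that $Y(t_-)$ and $Y(t_+)$ both lie within roughly $10^{-9}$ of $1$ (near $t_0$ the derivative $Y'$ is of moderate size while $t_+-t_-=2\cdot 10^{-10}$), so the two rational inequalities above hold only by a margin of order $10^{-10}$. One must therefore account for every error contribution and confirm that their sum stays under that margin: the sub-$10^{-10}$ uncertainty in each of $h_1(t_-),h_1(t_+)$; the sub-$10^{-10}$ uncertainty in each of the arguments $h_2(t_-),h_2(t_+)$, which enters $\exp(h_2(t_\pm))$ with the factor $\exp(h_2(t_\pm))<1$; and the $\approx 2\cdot 10^{-11}$ Taylor truncation error of Lemma~\ref{rw5t4rswre54trt546tr54trde}\ref{w54tryfe54t565tr54trd}.\ref{ret54etrwe54rew453re3red} --- all of it propagated through the multiplication by $h_1\approx 2.09$. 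Once these bounds are assembled as above, the final verification is a terminating computation with rational numbers, best handed, as in Lemma~\ref{fdtr5eyrr5tytrew54ter456tr54r}, to a computer algebra system, though hand-checkable in principle.
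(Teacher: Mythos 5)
Your proposal is correct and follows essentially the same route as the paper: continuity of $Y$ on $(0,1)$, strict monotonicity from Lemma~\ref{frete54trter46trer5tewre5te5454terfd45etrd}, and the Intermediate Value Theorem, with the two endpoint inequalities $Y(0.6263716633-10^{-10})<1<Y(0.6263716633+10^{-10})$ certified by combining the rational bounds on $h_1$ and $h_2$ from Lemma~\ref{fdtr5eyrr5tytrew54ter456tr54r} with the Taylor bounds on $\exp$ from Lemma~\ref{rw5t4rswre54trt546tr54trde}. The only difference is that the paper exhibits the resulting decimal expansions explicitly as finite certificates, whereas you delegate that terminating rational computation to exact arithmetic, which is consistent with how the paper handles the analogous checks elsewhere.
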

\begin{proof}
Since all factors in denominators within $Y(t)$ are non-zero 
for $t\in(0,1)$, the function $t\mapsto Y(t)$ is continuous as a composition of continuous 
functions. By Lemma~\ref{frete54trter46trer5tewre5te5454terfd45etrd}, it is moreover 
strictly monotone increasing in $(0,1)$. Therefore the claim follows (existence from 
continuity, uniqueness from monotonicity) via the Intermediate Value Theorem 
if we can show that 
\begin{enumerate}[label={\rm(\arabic{*})}]
\item\label{gty56rtet546r6uyther54terdf} 
$Y(0.6263716633 - 10^{-10}) < 1$ \quad , 
\item\label{dsffr5t6terewrtewr5yt4re5t4rd} 
$Y(0.6263716633 + 10^{-10}) > 1$ \quad .
\end{enumerate}
A finite certificate for \ref{gty56rtet546r6uyther54terdf} is given by the calculation 
\begin{align}\label{ewr5464535342543454r54rtds}
Y(0.6263716633 - 10^{-10}) 
& =  - 1 + h_1(0.6263716633 - 10^{-10})\cdot\exp(h_2(0.6263716633 - 10^{-10})) \notag \\
\parbox{0.3\linewidth}
{(by the upper bounds in 
\ref{wer5434564et3564ter5} and \ref{fdgret56e4trwe4r3ewe43rew453r4er54ed} 
in Lemma~\ref{fdtr5eyrr5tytrew54ter456tr54r}, 
and since $\exp$ is monotone increasing)} 
& < - 1 + 2.0941746326\cdot\exp(-0.0460123253) \notag \\
\parbox{0.3\linewidth}
{(by the upper bound in \ref{ret54etrwe54rew453re3red})} 
& < - 1 + 2.0941746326\cdot \notag \\
& \left(2.1701388889\cdot 10^{-11} + \sum_{0\leq i\leq 5} \tfrac{(-0.0460123253)^i}{i!}\right)
\notag \\
& = 0.999999999554440826331073832451\backslash \notag \\
&{\color{white} =}\ 82705870208185832244853853496068 + \tfrac13\cdot 10^{-62} < 1 \quad  , 
\end{align}
while a finite certificate for \ref{dsffr5t6terewrtewr5yt4re5t4rd} 
is given by the calculation 
\begin{align}\label{w564terer5462436e34e5tr54ewds}
Y(0.6263716633 + 10^{-10}) 
& = -1 + h_1(0.6263716633 + 10^{-10})\cdot\exp(h_2(0.6263716633 + 10^{-10})) \notag \\
\parbox{0.3\linewidth}
{(by the lower bounds in 
\ref{re54tert54re4re24ew4e} and \ref{ret54ytretr6yt5ryet564t564rt6754tr} 
in Lemma~\ref{fdtr5eyrr5tytrew54ter456tr54r}, 
and since $\exp$ is monotone increasing)} 
& > - 1 + 2.0941746334\cdot\exp(-0.0460123254) \notag \\ 
\parbox{0.3\linewidth}
{(by the lower bound in \ref{fdgwr54yterew43rew435re543rds})} 
& > -1 + 2.0941746334\cdot \notag \\
& \left(1.0850694444\cdot 10^{-11} + \sum_{0\leq i\leq 5} \tfrac{(-0.0460123254)^i}{i!}\right)
\notag \\
& = 1.0000000000957417297668951405800\backslash \notag \\
&{\color{white} =}\ 480697033915364640304336242832 > 1\ , 
\end{align}
where in each case $\backslash$ denotes that a number contiguously continues 
in the next line. 
\end{proof}

The following defines the function $t$ from \cite{GimenezNoy}, 
with explicit values for the `suitable small neighborhood of $1$' 
\cite[p.~317, paragraph~2]{GimenezNoy}: 
\begin{definition}
\label{rew54terer5etwe5redew54re}
For every $y\in (0.9999999996,1.00000000009)$ we define $t(y)$ 
to be the unique $t\in(0.6263716633 - 10^{-10},0.6263716633 + 10^{-10})$ with $Y(t)=y$. 
\end{definition}
Let us note that $t_0=t(1)$. 

\begin{remark}[{correctness of Definition~\ref{rew54terer5etwe5redew54re}}]
Definition~\ref{rew54terer5etwe5redew54re} does indeed define a function 
\begin{equation}\label{ret56trrte654e65tref}
t\colon (0.9999999996,1.00000000009)\to(0.6263716633 - 10^{-10},0.6263716633 + 10^{-10})\quad . 
\end{equation}
\end{remark}
\begin{proof}
Uniqueness of the $t(y)$ from Definition~\ref{rew54terer5etwe5redew54re} follows from 
Lemma~\ref{frete54trter46trer5tewre5te5454terfd45etrd}, while for existence we have 
to show that the argument in the proof of Lemma~\ref{t54treert4re354tree34terfd} can be 
carried out with any $y\in(0.9999999996,1.00000000009)$ replacing the $1$ in the 
conditions \ref{gty56rtet546r6uyther54terdf} and \ref{dsffr5t6terewrtewr5yt4re5t4rd} 
of Lemma~\ref{t54treert4re354tree34terfd}. 
This follows from \eqref{ewr5464535342543454r54rtds} and \eqref{w564terer5462436e34e5tr54ewds}: 
since \newline
$0.99999999955444082633107383245182705870208185832244853853496068$ $+$ $\tfrac13\cdot 10^{-62}$ 
\newline $<$ $0.9999999996$ and \newline 
$1.0000000000957417297668951405800480697033915364640304336242832$ 
$>$ $1.00000000009$, \newline 
each of these calculations can be used as is for proving 
the existence of any $t(y)$ with $y\in(0.9999999996,1.00000000009)$. 
\end{proof}

\begin{definition}[{$R$; cf. \cite[(2.6)]{GimenezNoy}}]
\label{ftr5y4terwr454et54tre43ew354er54rted}
With $t$ as in Definition~\ref{rew54terer5etwe5redew54re}, we define the function 
\begin{align}\label{ewr54ertewtrwr54erfdx}
R\colon (0.9999999996,1.00000000009) & \longrightarrow \eR \notag \\
y & \longmapsto R(y) := \frac{\left(3\cdot t(y) + 1\right)
\left(-t(y) + 1\right)^3}{16\cdot t(y)^3} \quad . 
\end{align}
\end{definition}

\begin{lemma}\label{fdg54treert3254rede54terdrw45r657ty}
With $\xi(t):=\tfrac{\left(3\cdot t + 1\right)\left(-t + 1\right)^3}{16\cdot t^3}$, 
\begin{enumerate}[label={\rm(\arabic{*})}]
\item\label{sdwferf54trre5665r45tere64tr} 
$0.03819109771 < \xi(0.6263716633 - 10^{-10}) < 0.03819109772$ \quad , 
\item\label{fdgwret4rgddewr54etrderr54te}
$0.03819109762 < \xi(0.6263716633 + 10^{-10}) < 0.03819109763$ \quad . 
\end{enumerate}
\end{lemma}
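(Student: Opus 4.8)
The plan is to reduce each of the two claimed double inequalities to a finite comparison of explicit integers, in exactly the spirit of Lemma~\ref{fdtr5eyrr5tytrew54ter456tr54r}, and then to observe that this reduction uses only the ring operations on $\Zed$, so that the verification can be carried out with certainty either by hand or by any computer algebra system with exact bignum arithmetic.

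First I would note that $\xi$ is a rational function whose denominator $16t^3$ does not vanish on $(0,1)$, and that both evaluation points $0.6263716633 \mp 10^{-10}$ lie in $(0,1)$; hence $\xi(0.6263716633 \mp 10^{-10})$ are well-defined rational numbers. Writing such a point as $t = p/q$ with $q = 10^{10}$ and $p \in \{6263716632,\, 6263716634\}$, substitution and clearing of denominators give
\[
\xi(p/q) \;=\; \frac{(3p+q)\,(q-p)^3}{16\,p^3\,q}\quad,
\]
a quotient of two explicit positive integers; here $p^3$ already has about $30$ digits and $(3p+q)(q-p)^3$ about $40$ digits, but both are completely concrete.

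Next, each claimed bound has the form $\tfrac{A}{10^{11}} < \xi(p/q) < \tfrac{B}{10^{11}}$ with $A,B$ the ten-digit integers read off from the statement (for instance $A = 3819109771$, $B = 3819109772$ for item~\ref{sdwferf54trre5665r45tere64tr}). Multiplying through by the positive integer $10^{11}\cdot 16\,p^3\,q$ turns this into the pair of integer inequalities
\[
A\cdot 16\,p^3\,q \;<\; 10^{11}(3p+q)(q-p)^3 \;<\; B\cdot 16\,p^3\,q\quad,
\]
both sides of which are integers of roughly $50$ digits obtained by additions, subtractions and multiplications only. One then simply performs these finitely many comparisons, once with $p = 6263716632$ (item~\ref{sdwferf54trre5665r45tere64tr}) and once with $p = 6263716634$ (item~\ref{fdgwret4rgddewr54etrderr54te}).

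I expect no conceptual obstacle at all: there is no estimation, no limiting process, and no appeal to the earlier analytic lemmas --- the entire content is exact integer arithmetic. The only genuine nuisance is the sheer size of the integers involved ($(q-p)^3$ and $10^{11}(3p+q)(q-p)^3$ run to several dozen digits), so the one thing to watch is clerical accuracy in the long multiplications; accordingly I would recommend, just as for Lemma~\ref{fdtr5eyrr5tytrew54ter456tr54r}, delegating the arithmetic to a computer algebra system or an arbitrary-precision library while keeping the reduction to the displayed integer inequalities as the human-checkable certificate. As a sanity check one may additionally observe that $\xi$ is strictly decreasing in a neighbourhood of the point $t_0$ of Lemma~\ref{t54treert4re354tree34terfd} (since $\xi'(t)/\xi(t) = 3\bigl(\tfrac{1}{3t+1} - \tfrac{1}{1-t} - \tfrac{1}{t}\bigr) < 0$ there), which is consistent with the interval in~\ref{sdwferf54trre5665r45tere64tr} lying entirely above the one in~\ref{fdgwret4rgddewr54etrderr54te}; but this monotonicity is not needed for the two isolated evaluations.
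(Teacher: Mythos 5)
Your proposal is correct and is essentially the paper's own proof: the paper simply remarks that these are ``finite statements about integers'' to be verified by exact rational/bignum arithmetic (as in Lemma~\ref{fdtr5eyrr5tytrew54ter456tr54r}), and you have merely made that reduction explicit by clearing denominators to the integer inequalities $A\cdot 16\,p^3 q < 10^{11}(3p+q)(q-p)^3 < B\cdot 16\,p^3 q$. The added monotonicity remark is a harmless consistency check not needed for the two evaluations.
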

\begin{proof}
Finite statements about integers. Same comments as in the proof of 
Lemma~\ref{fdtr5eyrr5tytrew54ter456tr54r} apply. 
\end{proof}

\begin{lemma}[{some pointwise bounds for $B_0(t)$}]
\label{45ewrtewswrertewdsxz}
With $B_0$ as in Definition~\ref{r54trerewr5w4ytrere5t4rw43re}.\ref{r435terew534w254fe43rew}, 
\begin{enumerate}[label={\rm(\arabic{*})}]
\item\label{fe543e5456454657trt54rewds54tr} 
$0.00073969957 < B_0(0.6263716633 - 10^{-10}) < 0.00073969958$ \quad , 
\item\label{r56t65t5ret4ree5y4tre54tre54tr}
$0.00073969956 < B_0(0.6263716633 + 10^{-10}) < 0.00073969957$ \quad . 
\end{enumerate}
\end{lemma}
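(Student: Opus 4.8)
The plan is to reduce $B_0(t_\pm)$, where $t_\pm:=0.6263716633\pm10^{-10}$, to a rational number plus a rational--linear combination of logarithms of explicitly given positive rationals, and then to produce for that combination a rational enclosure of width well below $10^{-11}$, exactly in the spirit of the proof of Lemma~\ref{t54treert4re354tree34terfd} and of the certificates \eqref{ewr5464535342543454r54rtds}--\eqref{w564terer5462436e34e5tr54ewds}. Definition~\ref{r54trerewr5w4ytrere5t4rw43re}.\ref{r435terew534w254fe43rew} already exhibits $B_0$ in the shape
\[
B_0(t)=c_0(t)+c_1(t)\log(t+1)+c_2(t)\log(3t+1)+c_3(t)\log(2t+1)+c_4(t)\log(t+3)+c_5(t)\log t ,
\]
with $c_0,\dots,c_5$ rational functions, the constant $-\tfrac38\log 16=-\tfrac32\log 2$ being absorbed into $c_0$ at the price of also needing the number $\log 2$. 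Substituting $t\leftarrow t_\pm$ turns every coefficient $c_i(t_\pm)$ and every logarithm argument $t_\pm+1,\ 3t_\pm+1,\ 2t_\pm+1,\ t_\pm+3,\ t_\pm,\ 2$ into an explicit rational, and in particular decides the sign of each $c_i(t_\pm)$; this is a finite computation with arbitrarily long integers, exactly as in the proofs of Lemmas~\ref{fdtr5eyrr5tytrew54ter456tr54r} and~\ref{fdg54treert3254rede54terdrw45r657ty}.

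The sole transcendental ingredient is thus a handful of values $\log a$ with $a$ a positive rational, which must be enclosed between rationals with a certified error bound. For this I would, given such an $a$, first write $a=2^m b$ with $m\in\Z$ and $b\in[\tfrac23,\tfrac43]$, so that $\log a=m\log 2+\log b$, and set $r:=\tfrac{b-1}{b+1}$, noting $|r|\le\tfrac15$. From the rapidly convergent series
\[
\log b = 2\sum_{k\ge 0}\frac{r^{2k+1}}{2k+1}
\]
one obtains, for any $K$, rational lower and upper bounds for $\log b$: take the partial sum through $k=K$ and correct it by the rational geometric majorant $\tfrac{2}{1-r^2}\,|r|^{2K+3}$ of the omitted tail. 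Because $|r|\le\tfrac15$, a dozen or so terms already push this tail below $10^{-13}$; the number $\log 2$ is itself enclosed once, by the same device (for instance from $\log 2=2\sum_{k\ge0}\tfrac{1}{(2k+1)\,3^{2k+1}}$). This is the analogue for $\log$ of the Taylor bounds of Lemma~\ref{rw5t4rswre54trt546tr54trde}; one could instead invoke the Lagrange remainder for $\log(1+x)$, but the present series converges much faster.

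It then remains to assemble the certificate. For each $i\in\{1,\dots,5\}$, multiply the rational, sign-known value $c_i(t_\pm)$ into the rational enclosure of the corresponding logarithm, ordering the two product-endpoints by the sign of $c_i(t_\pm)$, and add these intervals together with the exact rational $c_0(t_\pm)$ (which itself carries a rational multiple of the enclosure of $\log 2$). This yields explicit rationals $\underline B,\overline B$ with $\underline B<B_0(t_\pm)<\overline B$, and the only thing left is the integer comparison confirming $[\underline B,\overline B]\subseteq(0.00073969957,\,0.00073969958)$ when $t=t_-$, and $[\underline B,\overline B]\subseteq(0.00073969956,\,0.00073969957)$ when $t=t_+$; displaying $\underline B$ and $\overline B$ in the style of \eqref{ewr5464535342543454r54rtds}--\eqref{w564terer5462436e34e5tr54ewds} completes the proof.

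The main obstacle is not conceptual but a matter of precision bookkeeping: $B_0(t_\pm)$ is a sum of several terms each of absolute value of order $10^{-1}$ that nearly cancel, leaving a value of order $7\cdot10^{-4}$ which the lemma is asked to pin down to within $10^{-11}$. Consequently the logarithm enclosures --- hence the truncation index $K$ --- together with the exact rational coefficients $c_i(t_\pm)$ must be carried to roughly twelve significant figures, so that the cancellation does not swamp the target accuracy. Choosing $K$ (and the working precision at each step) large enough, and then propagating the interval bounds carefully, is the one place requiring care; but since all the data remain rational throughout, it stays entirely mechanical and exactly certifiable.
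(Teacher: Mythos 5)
Your proposal is correct, but it does substantially more than the paper's own proof, which consists of the single remark ``Finite statements about integers. The same comments as in the proof of Lemma~\ref{fdtr5eyrr5tytrew54ter456tr54r} apply,'' i.e.\ the verification is simply delegated to the reader equipped with exact long-integer arithmetic (or, as the paper concedes for non-rational ingredients, an arbitrary-precision library such as \texttt{GMP} or \texttt{iRRAM}). Strictly speaking that delegation is only literally ``about integers'' for rational functions such as $h_1$, $h_2$ or $\xi$; for $B_0$ the values $\log(t_\pm+1)$, $\log(3t_\pm+1)$, $\log(2t_\pm+1)$, $\log(t_\pm+3)$, $\log(t_\pm)$ and $\log 16$ are transcendental, and this is exactly the point your scheme addresses: by writing $B_0(t_\pm)$ as an exact rational plus rational multiples of logarithms of explicit positive rationals, reducing each argument to $[\tfrac23,\tfrac43]$ times a power of $2$, and enclosing each logarithm via the $\mathrm{artanh}$ series $\log b=2\sum_{k\ge0}r^{2k+1}/(2k+1)$ with $r=(b-1)/(b+1)$, $|r|\le\tfrac15$, and the geometric tail majorant $\tfrac{2}{1-r^2}|r|^{2K+3}$, you obtain purely rational certificates in the same spirit as the paper's Taylor enclosures for $\exp$ in Lemma~\ref{rw5t4rswre54trt546tr54trde} and the certificates \eqref{ewr5464535342543454r54rtds}--\eqref{w564terer5462436e34e5tr54ewds}. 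Your remark on precision is also the right one: the summands of $B_0$ are of order $10^{-1}$ and cancel down to about $7\cdot10^{-4}$, while the target windows have width $10^{-11}$, so the logarithm enclosures and coefficients must be carried to roughly twelve to thirteen significant figures; with $K$ chosen accordingly this is entirely mechanical. In short, the paper's route buys brevity by trusting certified computation, whereas yours buys a genuinely hand-checkable, all-rational certificate at the cost of the bookkeeping you describe; both establish the stated bounds (whose numerical truth is, in either approach, ultimately confirmed by carrying out the finite computation).
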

\begin{proof}
Finite statements about integers. The same comments as in the proof of 
Lemma~\ref{fdtr5eyrr5tytrew54ter456tr54r} apply.
\end{proof}

\begin{lemma}[{uniform bounds for $B_0(t)$}]
\label{fdert34w43r4wer5t433453rte54tref}
With $B_0$ as in 
Definition~\ref{r54trerewr5w4ytrere5t4rw43re}.\ref{r435terew534w254fe43rew}, 
\begin{equation}\label{fde54tyreewdsdse4r4ert5463treer5t4ref} 
0.00073969896 < B_0(t) < 0.00073970019 
\end{equation}
for every $t\in I:=(0.6263716633 - 10^{-10},0.6263716633 + 10^{-10})$.
\end{lemma}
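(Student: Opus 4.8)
### Proof plan

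The plan is to derive the uniform bounds from the pointwise bounds of Lemma~\ref{45ewrtewswrertewdsxz} together with control on the derivative $B_0'$ over the tiny interval $I$, whose length is $2\cdot 10^{-10}$. The key observation is that $I$ is so short that even a crude bound on $|B_0'(t)|$ for $t\in I$ suffices: if we know $|B_0'(t)|\leq M$ on $I$, then for any $t\in I$ and the reference point $t^\star:=0.6263716633-10^{-10}$ (the left endpoint) we have $|B_0(t)-B_0(t^\star)|\leq M\cdot|I| = 2M\cdot 10^{-10}$ by the mean value theorem, and combining this with $B_0(t^\star) < 0.00073969958$ from Lemma~\ref{45ewrtewswrertewdsxz}.\ref{fe543e5456454657trt54rewds54tr} gives the upper bound, while the lower bound $0.00073969957 < B_0(t^\star)$ gives the lower bound, provided $2M\cdot 10^{-10}$ is comfortably below $6\cdot 10^{-10}$, i.e. provided $M < 3$ or so. Actually the stated slack is $0.00073970019 - 0.00073969958 = 6.1\cdot 10^{-10}$ on the right and $0.00073969957 - 0.00073969896 = 6.1\cdot 10^{-10}$ on the left, so it suffices to have $M\leq 3$ say, which is an extremely weak requirement.

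The first step is therefore to write down $B_0'(t)$ explicitly by differentiating the expression in Definition~\ref{r54trerewr5w4ytrere5t4rw43re}.\ref{r435terew534w254fe43rew} term by term. Each summand of $B_0$ is either (i) a rational function of $t$, (ii) a constant, or (iii) a rational function times one of $\log(t+1)$, $\log(3t+1)$, $\log(2t+1)$, $\log(t+3)$, $\log(t)$. Differentiating (iii) produces a rational-times-log term plus a purely rational term. So $B_0'(t)$ is a finite sum of the form $\sum_j r_j(t) + \sum_k s_k(t)\log(\ell_k(t))$ with $r_j,s_k$ rational and $\ell_k\in\{t,\,t+1,\,2t+1,\,3t+1,\,t+3\}$. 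The second step is to bound each piece on $I$: since all the denominators that appear ($t$, $t+3$, $3t+1$, $2t+1$, and their powers up to the sixth) are bounded away from $0$ on $I$ — indeed every such factor lies in an interval like $(0.626,0.627)$, $(3.626,3.627)$, $(2.878,2.880)$, $(2.252,2.254)$, so their reciprocals are $O(1)$ — each rational function $r_j$ and $s_k$ is bounded in absolute value by an explicit constant on $I$, and each $\log(\ell_k(t))$ is bounded since $\ell_k(t)$ ranges over a compact interval inside $(0.6,3.7)$, giving $|\log\ell_k(t)| < \log 4 < 1.4$. Adding up crudely yields a finite explicit bound $M$ on $|B_0'|$ over $I$; I expect $M$ to be something like a few hundred at worst before any care is taken, but one can afford to be generous because the interval length $10^{-10}$ kills everything: $2M\cdot 10^{-10}$ stays well under $6\cdot 10^{-10}$ as long as $M<3$. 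Hence if the crude bound is larger than $3$, one simply groups the terms a little more carefully (or evaluates the rational coefficients at the two endpoints of $I$ exactly, which are finite integer computations of the kind already invoked in Lemma~\ref{fdtr5eyrr5tytrew54ter456tr54r}) to bring $M$ below $3$; since $B_0$ is a smooth, slowly varying combination near $t\approx 0.626$, its actual derivative there is $O(1)$, so no real difficulty arises.

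The third and final step assembles the estimate: for every $t\in I$,
\begin{align}\label{fdg54etre54trwe543rtewds}
B_0(t) & < B_0(t^\star) + |B_0(t)-B_0(t^\star)| < 0.00073969958 + 2M\cdot 10^{-10} < 0.00073970019, \notag \\
B_0(t) & > B_0(t^\star) - |B_0(t)-B_0(t^\star)| > 0.00073969957 - 2M\cdot 10^{-10} > 0.00073969896,
\end{align}
where the first inequality on each line uses Lemma~\ref{45ewrtewswrertewdsxz}.\ref{fe543e5456454657trt54rewds54tr}, the middle uses the mean value theorem with the bound $M$ on $|B_0'|$ over the convex set $I$, and the last uses $2M\cdot 10^{-10} < 6\cdot 10^{-11}$, which holds once $M<0.3$ — and if one only gets $M<3$ one should instead widen the target constants slightly, but in fact the slack $6.1\cdot 10^{-10}$ already accommodates any $M\leq 3$. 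The main obstacle, such as it is, is purely bookkeeping: writing out $B_0'$ correctly and bounding its dozen-or-so rational coefficients on $I$ without arithmetic slips; there is no conceptual difficulty, and the whole thing reduces, exactly as the earlier lemmas in this document do, to a finite collection of exact integer inequalities that can be checked by hand or by a computer algebra system.
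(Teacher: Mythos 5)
Your route is genuinely different from the paper's. The paper never touches $B_0'$: it splits $B_0$ into its seven summands, proves each relevant factor (polynomial numerators, the logarithms, the denominators) monotone on $I$ by inspecting an explicit factorized derivative of that factor, evaluates at the two endpoints, and then adds the seven uniform bounds with attention to signs. Indeed the paper explicitly remarks that deducing the lemma from the pointwise values of Lemma~\ref{45ewrtewswrertewdsxz} would need monotonicity of $B_0$ itself on $I$, which it declines to prove; your mean-value-theorem idea is exactly a way to exploit those pointwise values without any monotonicity, trading it for a bound $\lvert B_0'(t)\rvert\le M$ on (the closure of) $I$ with $M\le 3$, which the slack of $6.1\cdot 10^{-10}$ on each side indeed permits. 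Your framework and arithmetic budget are sound (modulo the momentary slip ``$2M\cdot 10^{-10}<6\cdot 10^{-11}$'', which you correct in the same sentence), and the target inequality $\lvert B_0'\rvert\le 3$ is in fact true --- the endpoint values in Lemma~\ref{45ewrtewswrertewdsxz} already suggest $\lvert B_0'\rvert\approx 0.05$ near $t_0$.

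The gap is that this key quantitative step is never carried out, and your own description shows it is not automatic: you concede that a term-by-term triangle-inequality bound on $B_0'$ might come out at ``a few hundred'' (a careful crude count gives roughly $10$, still above $3$), and your proposed remedy is to ``group the terms a little more carefully'' or evaluate coefficients at the endpoints --- but that is precisely a term-by-term analysis of the rational and logarithmic pieces of $B_0'$ on $I$, of exactly the same kind and volume as the paper's direct analysis of $B_0$, and none of it is exhibited. In a document whose whole purpose is to produce explicit, hand-checkable certificates, asserting ``its actual derivative there is $O(1)$, so no real difficulty arises'' is the conclusion to be certified, not an argument for it. As written, therefore, the proof does not close: you must either display an explicit decomposition of $B_0'$ with endpoint-monotonicity bounds yielding a concrete $M\le 3$ (valid on all of $[0.6263716633-10^{-10},\,0.6263716633+10^{-10}]$, since your reference point is the left endpoint, which lies outside the open interval $I$), or fall back on the paper's summand-by-summand uniform bounds for $B_0$ itself.
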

\begin{proof}
If we had a proof that $B_0$ is monotone decreasing in $I$, 
then \eqref{fde54tyreewdsdse4r4ert5463treer5t4ref} would follow from the slightly stronger 
pointwise bounds in Lemma~\ref{45ewrtewswrertewdsxz}---but the (known) continuity of $B_0$ 
alone is of course not enough to use Lemma~\ref{45ewrtewswrertewdsxz}. 
Unfortunately, a complete proof of this monotonicity seems to require 
at least as much work as the proof of \eqref{fde54tyreewdsdse4r4ert5463treer5t4ref} 
that follows. 

The plan of the proof is the following: for each of the seven summands in $B_0$ we will 
derive both upper and lower bounds which uniformly hold in $I$. In the end, we add these 
bounds to derive the bounds in \eqref{fde54tyreewdsdse4r4ert5463treer5t4ref}. 

In the following paragraph, we prove the uniform bounds 
\begin{equation}\label{fdgert4tre4564674e5434eere43ter}
0.22495616614 < \tfrac{(3t-1)^2(t+1)^6\log(t+1)}{512t^6} < 0.22495616711 \quad 
\text{for every $t\in I$} \quad . 
\end{equation}

Since $3\cdot t > 1$ for every $t\in I$, 
the function $t\mapsto (3t-1)^2$ is evidently monotone increasing in $I$. 
So are the two functions $t\mapsto(t+1)^6$ and $t\mapsto\log(t+1)$. 
Therefore, $t\mapsto (3t-1)^2(t+1)^6\log(t+1)$ is monotone increasing in $I$ 
as a product of three such functions. Hence, for every $t\in I$, 
\begin{equation}\label{4533234}
(3t-1)^2(t+1)^6\log(t+1) 
< 
(3t-1)^2(t+1)^6\log(t+1)\biggr\rvert_{t=0.6263716633 + 10^{-10}}
< 
6.95601448698
\end{equation}
and 
\begin{equation}\label{f6574te54rd}
(3t-1)^2(t+1)^6\log(t+1) 
> 
(3t-1)^2(t+1)^6\log(t+1)\biggr\rvert_{t=0.6263716633 - 10^{-10}}
> 
6.95601447059 \quad .
\end{equation}
The function $t\mapsto 512 t^6$ is evidently monotone increasing in $I$. 
Hence, for every $t\in I$, 
\begin{equation}\label{fdgter54tyreew54te5w324r5653re}
512 t^6 > 512 t^6\biggr\rvert_{t=0.6263716633 - 10^{-10}} > 30.92164387643 
\end{equation}
and 
\begin{equation}\label{sfrt563etrdgfss3w456435tr54re}
512 t^6 < 512 t^6\biggr\rvert_{t=0.6263716633 + 10^{-10}} < 30.92164393568 \quad .
\end{equation}
Since \eqref{f6574te54rd} and \eqref{sfrt563etrdgfss3w456435tr54re} hold in all of $I$, 
it follows that, for every $t\in I$, 
\begin{equation}\label{fdtre6y5teretr4er54tr54tr}
\tfrac{(3t-1)^2(t+1)^6\log(t+1)}{512t^6} 
> 
\tfrac{6.95601447059}{30.92164393568} 
> 
0.22495616614 \quad ,
\end{equation}
proving the lower bound in \eqref{fdgert4tre4564674e5434eere43ter}. 

Since \eqref{4533234} and \eqref{fdgter54tyreew54te5w324r5653re} hold in all of $I$, 
it follows that, for every $t\in I$, 
\begin{equation}\label{ftr56ertew432w3e543r43w5wrerw54ewds} 
\tfrac{(3t-1)^2(t+1)^6\log(t+1)}{512t^6} 
< 
\tfrac{6.95601448698}{30.92164387643} 
< 
0.22495616711\quad , 
\end{equation}
proving the upper bound in \eqref{fdgert4tre4564674e5434eere43ter}. 

In the following paragraph, we prove the uniform bounds 
\begin{equation}\label{rew54rse43rwe24w3ew}
-0.28456395530 < \tfrac{(3t^4 - 16t^3 + 6t^2 -1)\log(3t+1)}{32 t^3} < -0.28456395528 \quad 
\text{for every $t\in I$} \quad . 
\end{equation}

Since $2+\sqrt{3} > 1$, $2-\sqrt{3}<0.5$ and 
$\tfrac{\upd}{\upd t}(12t^3 - 48t^2 + 12t)$ $=$ $36t^2 - 96t + 12$ $=$ 
$12 t (t-(2+\sqrt{3})) (t-(2-\sqrt{3}))$, it is evident that 
$\tfrac{\upd}{\upd t}(12t^3 - 48t^2 + 12t) < 0$ for every $t\in I$, i.e., 
$t\mapsto 3t^4-16t^3+6t^2-1$ is strictly monotone decreasing in $I$, so 
\begin{align}\label{w54tewe4332ewe54we354tre543terfre635ters}
    3t^4-16t^3+6t^2-1 
& > 3t^4-16t^3+6t^2-1\biggr\rvert_{t=0.6263716633 + 10^{-10}} \notag \\
& = -2.1161809442159711262496568523624448554192\quad\text{for every $t\in I$} \quad , 
\end{align}
and
\begin{align}\label{rte54trwe43ew54ewreew43rewweerwds}
    3t^4-16t^3+6t^2-1 
& < 3t^4-16t^3+6t^2-1\biggr\rvert_{t=0.6263716633 - 10^{-10}} \notag \\
& = -2.1161809425425888723475949656101944348672\quad\text{for every $t\in I$} \quad . 
\end{align}
The function $t\mapsto \log(3t+1)$ is evidently strictly monotone increasing in $I$, hence 
\begin{equation}\label{wr54tww2er54r43e5345r}
\log(3t+1) > \log(3t+1)\biggr\rvert_{t=0.6263716633 - 10^{-10}} 
> 1.05748295164 \quad\text{for every $t\in I$} \quad ,
\end{equation}
and
\begin{equation}\label{rt546tregfstrertre54trw43er}
\log(3t+1) < \log(3t+1)\biggr\rvert_{t=0.6263716633 + 10^{-10}} 
< 1.05748295186 \quad\text{for every $t\in I$} \quad .
\end{equation}
The function $t\mapsto 32 t^3$ is evidently monotone increasing in $I$. 
Hence, for every $t\in I$, 
\begin{equation}\label{de4w4wr3e43reww43resa}
32t^3 > 32t^3\biggr\rvert_{t=0.6263716633 - 10^{-10}} = 7.864050340179393384432870014976 \quad ,
\end{equation}
and
\begin{equation}\label{ew54tre3453543r4tewrfds}
32t^3 < 32t^3\biggr\rvert_{t=0.6263716633 + 10^{-10}} = 7.864050347712349427668874499328 \quad . 
\end{equation}
It follows that, for every $t\in I$, 
\begin{align}\label{er54tere4343254r54tref}
  & \tfrac{(3t^4-16t^3+6t^2-1)\log(3t+1)}{32t^3}\notag \\
\parbox{0.25\linewidth}{(by \eqref{w54tewe4332ewe54we354tre543terfre635ters})}
> & \tfrac{(-2.1161809442159711262496568523624448554192)\cdot\log(3t+1)}{32t^3} \notag \\
\parbox{0.25\linewidth}{(by \eqref{rt546tregfstrertre54trw43er}; 
we recall that multiplying with a negative number flips an inequality)}
> & \tfrac{(-2.1161809442159711262496568523624448554192)\cdot 1.05748295186}{32t^3} \notag \\
\parbox{0.25\linewidth}{(by \eqref{ew54tre3453543r4tewrfds})}
> & \tfrac{(-2.1161809442159711262496568523624448554192)\cdot 1.05748295186}
{7.864050347712349427668874499328} \notag \\
> & -0.28456395530\quad ,
\end{align}
proving the lower bound in \eqref{rew54rse43rwe24w3ew}, and also that, for every $t\in I$, 
\begin{align}\label{trr65rte43e5ter234ew}
  & \tfrac{(3t^4-16t^3+6t^2-1)\log(3t+1)}{32t^3}\notag \\
\parbox{0.25\linewidth}{(by \eqref{rte54trwe43ew54ewreew43rewweerwds})}
< & \tfrac{(-2.1161809425425888723475949656101944348672)\cdot\log(3t+1)}{32t^3} \notag \\
\parbox{0.25\linewidth}{(by \eqref{wr54tww2er54r43e5345r}; 
we recall that multiplying with a negative number flips an inequality)}
< & \tfrac{(-2.1161809425425888723475949656101944348672)\cdot 1.05748295164}{32t^3} \notag \\
\parbox{0.25\linewidth}{(by \eqref{de4w4wr3e43reww43resa})}
< & \tfrac{(-2.1161809425425888723475949656101944348672)\cdot 1.05748295164}
{7.864050340179393384432870014976} \notag \\
< & -0.28456395528\quad ,
\end{align}
which proves the upper bound in \eqref{rew54rse43rwe24w3ew}. 

In the following paragraph, we prove the uniform bounds 
\begin{equation}\label{dewr43eqwe43rew4rewds}
0.00029614190 < \tfrac{(3t+1)^2(-t+1)^6\log(2t+1)}{1024 t^6} < 0.00029614191 \quad 
\text{for every $t\in I$} \quad . 
\end{equation}
While it is evident that $t\mapsto(3t+1)^2$ is strictly monotone increasing, 
and $t\mapsto (-t+1)^6$ strictly monotone decreasing in $I$, it is not evident 
whether the product $t\mapsto(3t+1)^2(-t+1)^6$ decreases or increases in $I$. 
To decide this, we note that 
$\tfrac{\upd}{\upd t}(3t+1)^2(-t+1)^6$ $=$ $24 (-1+t)^5 t (1 + 3 t)$, 
and from this factorization it \emph{is} evident 
that $\tfrac{\upd}{\upd t}(3t+1)^2(-t+1)^6 < 0$ for every $t\in I$, 
hence that $t\mapsto(3t+1)^2(-t+1)^6$ is indeed strictly monotone decreasing in $I$. 
Therefore, 
\begin{align}\label{fdr54tewer24w3re32r3ewr54re43}
(3t+1)^2(-t+1)^6 & < (3t+1)^2(-t+1)^6\biggr\rvert_{t=0.6263716633 - 10^{-10}} \notag \\
& < 0.02255053559 \quad\text{for every $t\in I$}\quad , 
\end{align}
and 
\begin{align}\label{fwre54terrt5y4rt54354weer54tr}
(3t+1)^2(-t+1)^6 & > (3t+1)^2(-t+1)^6\biggr\rvert_{t=0.6263716633 + 10^{-10}} \notag \\
& > 0.02255053553 \quad\text{for every $t\in I$}\quad . 
\end{align}
Moreover, since function $t\mapsto\log(2t+1)$ evidently 
is strictly monotone increasing in $I$, we know that 
\begin{align}\label{dew54retw54trt5e4tyrgfd}
\log(2t+1) & > \log(2t+1)\biggr\rvert_{t=0.6263716633 - 10^{-10}} \notag \\
& > 0.81214872970 \quad\text{for every $t\in I$}\quad , 
\end{align}
and 
\begin{align}\label{d5645tw54te54erte5w4tref}
\log(2t+1) & < \log(2t+1)\biggr\rvert_{t=0.6263716633 + 10^{-10}} \notag \\
& < 0.81214872989 \quad\text{for every $t\in I$}\quad . 
\end{align}
Furthermore, since the function $t\mapsto 1024t^6$ evidently 
is strictly monotone increasing in $I$, we know that 
\begin{align}\label{ew453reew34req434wre54rew}
1024t^6 & > 1024t^6\biggr\rvert_{t=0.6263716633 - 10^{-10}} \notag \\
& > 61.84328775287 \quad\text{for every $t\in I$}\quad , 
\end{align}
and 
\begin{align}\label{43teww43r4ter5e4tre43rew3}
1024t^6 & < 1024t^6\biggr\rvert_{t=0.6263716633 + 10^{-10}} \notag \\
& < 61.84328787136 \quad\text{for every $t\in I$}\quad . 
\end{align}
It follows that, for every $t\in I$, 
\begin{align}\label{w54rteew45654w2564654yt65tre}
  & \tfrac{(3t+1)^2(-t+1)^6\log(2t+1)}{1024 t^6} \notag \\
\parbox{0.25\linewidth}{(by \eqref{fwre54terrt5y4rt54354weer54tr})}
> & \tfrac{0.02255053553\cdot\log(2t+1)}{1024 t^6} \notag \\
\parbox{0.25\linewidth}{(by \eqref{dew54retw54trt5e4tyrgfd})}
> & \tfrac{0.02255053553\cdot 0.81214872970}{1024 t^6} \notag \\
\parbox{0.25\linewidth}{(by \eqref{43teww43r4ter5e4tre43rew3})}
> & \tfrac{0.02255053553\cdot 0.81214872970}{61.84328787136} \notag \\
> & 0.00029614190\quad ,
\end{align}
proving the lower bound in \eqref{dewr43eqwe43rew4rewds}, 
and also that, for every $t\in I$, 
\begin{align}\label{w324w4e45354543543erd43red}
  & \tfrac{(3t+1)^2(-t+1)^6\log(2t+1)}{1024 t^6} \notag \\
\parbox{0.25\linewidth}{(by \eqref{fdr54tewer24w3re32r3ewr54re43})}
< & \tfrac{0.02255053559\cdot\log(2t+1)}{1024 t^6} \notag \\
\parbox{0.25\linewidth}{(by \eqref{d5645tw54te54erte5w4tref})}
< & \tfrac{0.02255053559\cdot 0.81214872989}{1024 t^6} \notag \\
\parbox{0.25\linewidth}{(by \eqref{ew453reew34req434wre54rew})}
< & \tfrac{0.02255053559\cdot 0.81214872989}{61.84328775287} \notag \\
< & 0.00029614191\quad ,
\end{align}
which proves the upper bound in \eqref{dewr43eqwe43rew4rewds}.

Since $t\mapsto\tfrac14\log(t+3)$ is evidently strictly monotone increasing in $I$, 
we know that, for every $t\in I$, 
\begin{align}\label{dwr4t3ree43re43re4545ttyrr}
\tfrac14\log(t+3) & > \tfrac14\log(t+3)\biggr\rvert_{t=0.6263716633 - 10^{-10}} \notag \\
& > 0.32205815164\quad \text{for every $t\in I$} \quad , 
\end{align}
and
\begin{align}\label{fdgwr5t4ewrfr54t3er46e4rte}
\tfrac14\log(t+3) & < \tfrac14\log(t+3)\biggr\rvert_{t=0.6263716633 + 10^{-10}} \notag \\
& < 0.32205815165\quad 
\text{for every $t\in I$} \quad . 
\end{align}

Since $t\mapsto\tfrac12\log(t)$ is evidently strictly monotone increasing in $I$, 
we know that, for every $t\in I$, 
\begin{align}\label{e6t5rw54rter54tr564}
\tfrac12\log(t) & > \tfrac12\log(t)\biggr\rvert_{t=0.6263716633 - 10^{-10}} \notag \\
& > -0.23390568644 \quad \text{for every $t\in I$} \quad , 
\end{align}
and
\begin{align}\label{rt54454rterrw5454re}
\tfrac12\log(t) & < \tfrac12\log(t)\biggr\rvert_{t=0.6263716633 + 10^{-10}} \notag \\
& < -0.23390568627\quad \text{for every $t\in I$} \quad . 
\end{align}

As to the summand $\tfrac38\log(16)$ in $B_0$, there are the bounds 
\begin{equation}\label{fert5e4tresd54twrte546tere54treds}
1.03972077083< \tfrac38\log(16) < 1.03972077084\quad . 
\end{equation}

In the following paragraph, we prove the uniform bounds 
\begin{equation}\label{fewr54terew5t4ree54tree543ertf}
0.02472734758 < \tfrac{(217t^6 + 920t^5 + 972t^4 + 1436t^3 + 205t^2 - 172t + 6)(-t+1)^2}
{2048 t^4(3t+1)(t+3)} < 0.02472734762 \quad 
\text{for every $t\in I$} \quad . 
\end{equation}
We have $\tfrac{\upd}{\upd t}$ 
$(217t^6 + 920t^5 + 972t^4 + 1436t^3 + 205t^2 - 172t + 6)(-t+1)^2$ $=$ 
$2(t-1)(868t^6 + 2569t^5 + 616t^4 + 1646t^3 - 1744t^2 - 463t + 92)$, 
and since $2(t-1)<0$ for every $t\in I$, to prove that 
$t\mapsto (217t^6 + 920t^5 + 972t^4 + 1436t^3 + 205t^2 - 172t + 6)(-t+1)^2$ 
is strictly monotone increasing in $I$ it suffices to show that 
$868t^6 + 2569t^5 + 616t^4 + 1646t^3 - 1744t^2 - 463t + 92 < 0$ for every $t\in I$. 
This is equivalent to 
\begin{equation}\label{er563r4eewe54w53456trerw53r4ew}
868t^6 + 2569t^5 + 616t^4 + 1646t^3 + 92\quad <\quad 1744t^2 + 463t\quad 
\text{for every $t\in I$}\quad . 
\end{equation}
Since both $t\mapsto 868t^6 + 2569t^5 + 616t^4 + 1646t^3 + 92$ and $t\mapsto 1744t^2 + 463t$, 
are strictly monotone increasing in $I$, we have, for every $t\in I$, 
\begin{align}\label{ft5e4treew5r4tere54tre45456r54y5542ew654}
868t^6 + 2569t^5 + 616t^4 + 1646t^3 + 92 
& < 868t^6 + 2569t^5 + 616t^4 + 1646t^3 + 92\biggr\rvert_{t=0.6263716633 + 10^{-10}}\notag \\
& < 891.450148292474 \notag \\
& < 974.25358710372530451456 \notag \\
& = 1744t^2 + 463t\biggr\rvert_{t=0.6263716633 - 10^{-10}} \quad < \quad 1744t^2 + 463t\quad , 
\end{align}
proving \eqref{er563r4eewe54w53456trerw53r4ew}. Since we now know that 
$t\mapsto (217t^6 + 920t^5 + 972t^4 + 1436t^3 + 205t^2 - 172t + 6)(-t+1)^2$ is 
strictly monotone increasing in $I$, it follows that, for every $t\in I$, 
\begin{align}\label{fretsfe5yt4retrter543reew4t3re}
  & (217t^6+920t^5+972t^4+1436t^3+205t^2-172t+6)(-t+1)^2\notag\\ 
> & (217t^6+920t^5+972t^4+1436t^3+205t^2-172t+6)(-t+1)^2\biggr\rvert_{t=0.6263716633 - 10^{-10}}\notag\\ 
> & 81.3892822256
\end{align}
and 
\begin{align}\label{ertw54yr54654tre5434re435e}
  & (217t^6+920t^5+972t^4+1436t^3+205t^2-172t+6)(-t+1)^2\notag\\ 
< & (217t^6+920t^5+972t^4+1436t^3+205t^2-172t+6)(-t+1)^2\biggr\rvert_{t=0.6263716633 + 10^{-10}}\notag\\
< & 81.3892822381 \quad . 
\end{align}
Since $t\mapsto 2048 t^4(3t+1)(t+3)$ is evidently strictly monotone increasing in $I$, 
it follows that, for every $t\in I$, 
\begin{align}\label{fd5e4ytrr65t4tr423e43reds}
2048 t^4(3t+1)(t+3) > 2048 t^4(3t+1)(t+3)\biggr\rvert_{t=0.6263716633 - 10^{-10}} > 3291.4683555 
\end{align}
and 
\begin{align}\label{er654re42535454trry5t4rry5e4t54}
2048 t^4(3t+1)(t+3) < 2048 t^4(3t+1)(t+3)\biggr\rvert_{t=0.6263716633 + 10^{-10}} < 3291.4683606\quad.
\end{align}

It follows that, for every $t\in I$, 
\begin{align}\label{dr54tre45454rte43res}
  & \tfrac{(217t^6 + 920t^5 + 972t^4 + 1436t^3 + 205t^2 - 172t + 6)(-t+1)^2}
{2048 t^4(3t+1)(t+3)} \notag \\
\parbox{0.25\linewidth}{(by \eqref{fretsfe5yt4retrter543reew4t3re})}
> & \tfrac{81.3892822256}{2048 t^4(3t+1)(t+3)} \notag \\
\parbox{0.25\linewidth}{(by \eqref{er654re42535454trry5t4rry5e4t54})}
> & \tfrac{81.3892822256}{3291.4683606} > 0.02472734758\quad , 
\end{align}
proving the lower bound in \eqref{fewr54terew5t4ree54tree543ertf}, and 
\begin{align}\label{ewr4teq435353465tr4543ewsq2w}
  & \tfrac{(217t^6 + 920t^5 + 972t^4 + 1436t^3 + 205t^2 - 172t + 6)(-t+1)^2}
{2048 t^4(3t+1)(t+3)} \notag \\
\parbox{0.25\linewidth}{(by \eqref{ertw54yr54654tre5434re435e})}
< & \tfrac{81.3892822381}{2048 t^4(3t+1)(t+3)} \notag \\
\parbox{0.25\linewidth}{(by \eqref{fd5e4ytrr65t4tr423e43reds})}
< & \tfrac{81.3892822381}{3291.4683555} < 0.02472734762\quad , 
\end{align}
proving the upper bound in \eqref{fewr54terew5t4ree54tree543ertf}.

We now add our uniform bounds for the summands in $B_0$ to prove the 
uniform bounds in \eqref{fde54tyreewdsdse4r4ert5463treer5t4ref}. In 
doing so, we have to pay attention which summand appears 
with a minus-sign in the definition of $B_0$. 

From the lower bound in \eqref{fdgert4tre4564674e5434eere43ter}, 
the upper bounds in \eqref{rew54rse43rwe24w3ew} and \eqref{dewr43eqwe43rew4rewds}, 
the lower bound in \eqref{dwr4t3ree43re43re4545ttyrr}, and 
the upper bounds in \eqref{e6t5rw54rter54tr564}, 
\eqref{fert5e4tresd54twrte546tere54treds} and \eqref{fewr54terew5t4ree54tree543ertf}, 
it follows that, for every $t\in I$, 
\begin{align}\label{dfe54terswe54e43r5e445e4tr}
B_0(t) & > 0.22495616614 - (-0.28456395528) - 0.00029614191 \notag \\ 
     & + 0.32205815164 - (-0.23390568627) - (1.03972077084) - (0.02472734762) \notag \\
     & = 0.00073969896 \quad , 
\end{align}
proving the lower bound in \eqref{fde54tyreewdsdse4r4ert5463treer5t4ref}. 

From the upper bound in \eqref{fdgert4tre4564674e5434eere43ter}, 
the lower bounds in \eqref{rew54rse43rwe24w3ew} and \eqref{dewr43eqwe43rew4rewds}, 
the upper bound in \eqref{dwr4t3ree43re43re4545ttyrr} and 
the lower bounds in \eqref{e6t5rw54rter54tr564}, 
\eqref{fert5e4tresd54twrte546tere54treds} and \eqref{fewr54terew5t4ree54tree543ertf} 
it follows that, for every $t\in I$, 
\begin{align}\label{fdrt5tsw4tr3e453rtewr65tr4e}
B_0(t) & < 0.22495616711 - (-0.28456395530) - 0.00029614190 \notag \\
     & + 0.32205815165 - (-0.23390568644) - (1.03972077083) - (0.02472734758) \notag \\ 
     & = 0.00073970019 \quad , 
\end{align}
proving the upper bound in \eqref{fde54tyreewdsdse4r4ert5463treer5t4ref}. 
This completes the proof of Lemma~\ref{fdert34w43r4wer5t433453rte54tref}. 
\end{proof}

\begin{lemma}[{bounds for $B_0(t_0)$}]
\label{fdrte54rdere4rewr544rt}
With $B_0$ as in 
Definition~\ref{r54trerewr5w4ytrere5t4rw43re}.\ref{r435terew534w254fe43rew}, 
\begin{equation}\label{fwr54trgfewr5etew54rewds}
0.00073969896 < B_0(t_0) < 0.00073970019
\end{equation}
\end{lemma}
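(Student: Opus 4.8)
The plan is to obtain the statement as an immediate corollary of two results already established, with no new computation required. The only thing to observe is that the point $t_0$ lies inside the interval $I$ on which $B_0$ has already been bounded uniformly.

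First I would invoke Lemma~\ref{t54treert4re354tree34terfd}, which asserts the existence and uniqueness of $t_0\in(0,1)$ with $Y(t_0)=1$ and, crucially for us, the localization
\[
0.6263716633 - 10^{-10} < t_0 < 0.6263716633 + 10^{-10}\quad,
\]
i.e.\ $t_0\in I=(0.6263716633 - 10^{-10},\,0.6263716633 + 10^{-10})$. Next I would apply Lemma~\ref{fdert34w43r4wer5t433453rte54tref}, whose conclusion $0.00073969896 < B_0(t) < 0.00073970019$ holds for \emph{every} $t\in I$; specializing to $t=t_0$ (which is legitimate precisely because of the previous sentence) yields the claimed inequality $0.00073969896 < B_0(t_0) < 0.00073970019$.

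I do not anticipate any obstacle: the genuine work---the pointwise integer certificates and the seven summand-by-summand uniform estimates making up the monotonicity-free argument---was already carried out in Lemmas~\ref{45ewrtewswrertewdsxz} and~\ref{fdert34w43r4wer5t433453rte54tref}, and the verified enclosure of $t_0$ was already carried out in Lemmas~\ref{fdtr5eyrr5tytrew54ter456tr54r} and~\ref{t54treert4re354tree34terfd}. The present lemma is simply the statement obtained by intersecting those two facts, so the proof is a single line combining the two references. (One could equally well have derived it from the slightly sharper pointwise bounds of Lemma~\ref{45ewrtewswrertewdsxz} together with continuity of $B_0$ and a monotonicity argument, but routing through the uniform bounds of Lemma~\ref{fdert34w43r4wer5t433453rte54tref} avoids having to prove any monotonicity.)
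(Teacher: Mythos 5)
Your argument is exactly the paper's own proof: Lemma~\ref{t54treert4re354tree34terfd} places $t_0$ in the interval $I$, and the uniform bounds of Lemma~\ref{fdert34w43r4wer5t433453rte54tref} are then specialized at $t=t_0$. Correct, and essentially identical to the paper's one-line argument.
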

\begin{proof}
In view of Lemma~\ref{t54treert4re354tree34terfd}, 
the bounds in \eqref{fwr54trgfewr5etew54rewds} follow 
from the uniform bounds in Lemma~\ref{fdert34w43r4wer5t433453rte54tref}.
\end{proof}

\begin{lemma}[{pointwise bounds for $B_2(t_0)$}]
\label{re54ter354etr5345trd}
With $B_2$ as in 
Definition~\ref{r54trerewr5w4ytrere5t4rw43re}.\ref{dfg54ter54trdr4e5wdfsew43re}, 
\begin{enumerate}[label={\rm(\arabic{*})}]
\item\label{dser4564ttreew43refd} 
$-0.0014914312 - 10^{-10} < B_2(0.6263716633 - 10^{-10}) < -0.0014914312 + 10^{-10}$ \quad , 
\item\label{rt563243ew545564t524er}
$-0.0014914312 - 10^{-10} < B_2(0.6263716633 + 10^{-10}) < -0.0014914312 + 10^{-10}$ \quad . 
\end{enumerate}
\end{lemma}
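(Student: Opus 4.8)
The plan is to mimic the proofs of Lemma~\ref{t54treert4re354tree34terfd} and Lemma~\ref{fdert34w43r4wer5t433453rte54tref}: split $B_2$ into its three ``logarithmic'' summands $c_1(t)\log(t+1)$, $c_2(t)\log(3t+1)$, $c_3(t)\log(2t+1)$, with
\[
c_1(t)=\tfrac{(-t+1)^3(3t-1)(3t+1)(t+1)^3}{256 t^6},\qquad
c_2(t)=-\tfrac{(-t+1)^3(3t+1)}{32t^3},\qquad
c_3(t)=\tfrac{(3t+1)^2(-t+1)^6}{512t^6},
\]
and its purely rational summand $c_0(t)=\tfrac{(t-1)^4(185t^4+698t^3-217t^2-160t+6)}{1024 t^4(3t+1)(t+3)}$, and then, for each of the two rational arguments $t^-:=0.6263716633-10^{-10}$ and $t^+:=0.6263716633+10^{-10}$, build a rational enclosure of $B_2(t^\pm)$ and check that it lies inside $(-0.0014914312-10^{-10},\ -0.0014914312+10^{-10})$.

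First I would evaluate $c_0(t^\pm)$ and $c_1(t^\pm),c_2(t^\pm),c_3(t^\pm)$ exactly as rationals; this is finite integer arithmetic in the sense of Lemma~\ref{fdtr5eyrr5tytrew54ter456tr54r}, and it simultaneously pins down the sign of each coefficient (from the visible factorizations one reads off $c_1,c_3,c_0>0$ and $c_2<0$ on $I$). Next I would supply rational lower and upper bounds for the three logarithm values at $t^\pm$. For $\log(3t+1)$ and $\log(2t+1)$ the uniform bounds $1.05748295164<\log(3t+1)<1.05748295186$ and $0.81214872970<\log(2t+1)<0.81214872989$ established over $I\ni t^\pm$ in the proof of Lemma~\ref{fdert34w43r4wer5t433453rte54tref} can be reused verbatim. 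For $\log(t^\pm+1)$ I would note that $t^\pm+1$ lies strictly between $\exp(0.48)$ and $\exp(0.49)$, hence $\log(t^\pm+1)\in(0.48,0.49)$; so it is enough to exhibit rationals $a<b$ inside $(0.48,0.49)$ with $\exp(a)<t^\pm+1<\exp(b)$, and each of these two comparisons reduces, by Lemma~\ref{rw5t4rswre54trt546tr54trde}.\ref{dft546rte54ee54564544}, to comparing $t^\pm+1$ with the rational number $\sum_{0\le i\le 11}\tfrac{x^i}{i!}$ shifted by the explicit rational error term $0.11998784433\cdot 10^{-11}$ (resp.\ $0.39995948109\cdot 10^{-12}$), which is again finite integer arithmetic.

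Finally I would assemble: multiply each logarithm bound by the corresponding $c_i(t^\pm)$, flipping the inequality whenever the coefficient is negative (i.e.\ for $c_2(t^\pm)$), exactly as is done repeatedly in the proof of Lemma~\ref{fdert34w43r4wer5t433453rte54tref}; then add the three products to $c_0(t^\pm)$; then read off the resulting rational enclosure of $B_2(t^-)$ and of $B_2(t^+)$ and verify that both sit in $(-0.0014914312-10^{-10},\ -0.0014914312+10^{-10})$.

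The genuine work, rather than bookkeeping, is in this last step, namely keeping the accumulated error under control. The budget is in fact generous: the Taylor remainder in the $\log(t+1)$ estimate is of order $10^{-11}$ and the residual slack in the reused $\log(3t+1),\log(2t+1)$ bounds is of order $10^{-10}$, and each of these is multiplied by a coefficient $|c_i(t^\pm)|$ that is comfortably below $1$, so the total error stays well under the target half-width $10^{-10}$. What does demand care is the tracking of signs --- in particular which inequality is flipped when multiplying through by the negative coefficient $c_2(t^\pm)$ --- so, in the spirit of Lemma~\ref{fdtr5eyrr5tytrew54ter456tr54r}, the reader is advised to have the final arithmetic checked mechanically.
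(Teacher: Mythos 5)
Your proposal is correct, but it takes a genuinely different (and more explicit) route than the paper, whose entire proof of this lemma is a delegation: ``left to the reader'', with the same remark as for Lemma~\ref{fdtr5eyrr5tytrew54ter456tr54r} that the check be entrusted to exact integer arithmetic. Strictly speaking that remark does not transfer verbatim, because $h_1$, $h_2$ and $\xi$ are rational whereas $B_2$ contains three logarithms, so one needs some scheme for enclosing $\log(t^\pm+1)$, $\log(2t^\pm+1)$, $\log(3t^\pm+1)$ by rationals before the task becomes finite integer arithmetic; your decomposition supplies exactly that, in the same style the paper itself uses for the uniform-bound Lemmas~\ref{fdert34w43r4wer5t433453rte54tref}, \ref{rew54trrte54ree3w54trwd} and \ref{rt5etr3e54tred}, and your reduction of the $\log(t^\pm+1)$ enclosure to the $\exp$-Taylor bounds of Lemma~\ref{rw5t4rswre54trt546tr54trde}.\ref{dft546rte54ee54564544} is sound (alternatively you could reuse the enclosures \eqref{dqwe432r4ewe4354rw54tr} and \eqref{fweet45e3w4tr43re5r4te}). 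Two small points. First, $t^\pm$ are the \emph{endpoints} of the open interval $I$, so they do not lie in $I$ and the uniform statements ``for every $t\in I$'' do not literally cover them; what you actually reuse are the endpoint evaluations \eqref{wr54tww2er54r43e5345r}, \eqref{rt546tregfstrertre54trw43er}, \eqref{dew54retw54trt5e4tyrgfd}, \eqref{d5645tw54te54erte5w4tref}, which are precisely statements about the values at $t^\pm$ (monotonicity of $\log$ then gives both one-sided bounds at both endpoints) --- harmless, but worth saying. Second, the error budget should be measured not against the half-width $10^{-10}$ but against the distance from $B_2(t^\pm)\approx-0.0014914312$ to the endpoints of the claimed interval, which on the tighter side is only about $8\cdot 10^{-11}$; since your accumulated slack (roughly $2\cdot 10^{-10}$ in the reused $\log(3t+1)$ bound times a coefficient of magnitude about $0.019$, plus smaller terms) is a few times $10^{-12}$, the verification still goes through comfortably. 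Note finally that the interval claimed here strictly contains the enclosure $(-0.001491431277,\,-0.001491431155)$ of Lemma~\ref{rew54trrte54ree3w54trwd}, whose proof does not invoke the present lemma, so the statement is also an immediate consequence of that (logically independent) uniform result.
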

\begin{proof}
Left to the reader. The same comments as in the proof of 
Lemma~\ref{fdtr5eyrr5tytrew54ter456tr54r} apply. 
\end{proof}

\begin{lemma}[{uniform bounds for $B_2(t)$}]
\label{rew54trrte54ree3w54trwd}
With $B_2$ as in Definition~\ref{r54trerewr5w4ytrere5t4rw43re}.\ref{dfg54ter54trdr4e5wdfsew43re}, 
\begin{equation}\label{r34terew5454redf} 
-0.001491431277 < B_2(t) < -0.001491431155 \quad . 
\end{equation}
for every $t\in I:=(0.6263716633 - 10^{-10},0.6263716633 + 10^{-10})$.
\end{lemma}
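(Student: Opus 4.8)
The plan is to mimic, summand by summand, the proof of Lemma~\ref{fdert34w43r4wer5t433453rte54tref} (the uniform bounds for $B_0$), since $B_2$ has exactly the same flavour: it is a sum of finitely many terms, each a product of a polynomial, possibly a logarithm $\log(t+1)$, $\log(3t+1)$ or $\log(2t+1)$, and the reciprocal of a polynomial, all evaluated over the tiny interval $I=(0.6263716633-10^{-10},\,0.6263716633+10^{-10})$. For each of the four summands in Definition~\ref{r54trerewr5w4ytrere5t4rw43re}.\ref{dfg54ter54trdr4e5wdfsew43re} I would establish a uniform upper and lower bound valid for all $t\in I$, and then add the four pairs of bounds, being careful about signs, to obtain \eqref{r34terew5454redf}.

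The key step for each summand is to decide the direction of monotonicity of its polynomial factors on $I$, so that the extreme values over $I$ are attained at the endpoints $t=0.6263716633\pm 10^{-10}$; those endpoint values are rational and can be bounded by exact integer arithmetic exactly as in Lemma~\ref{45ewrtewswrertewdsxz} and Lemma~\ref{re54ter354etr5345trd}. Concretely: the factor $(-t+1)^3$ is strictly decreasing and the factors $(3t-1),(3t+1),(t+1)^3,(t-1)^4,(2t+1)^2$ are all of fixed sign on $I$, so the only place where a genuine argument is needed is wherever a product of an increasing and a decreasing polynomial factor appears — there I would compute the derivative of that product, factor it (just as the excerpt does for $(3t+1)^2(-t+1)^6$, where $\frac{\upd}{\upd t}(3t+1)^2(-t+1)^6 = 24(-1+t)^5t(1+3t)$, or for the last summand of $B_0$), and read off its sign on $I$ from the factorization. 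The same applies to the quartic $185t^4+698t^3-217t^2-160t+6$ in the fourth summand: differentiate, factor, and check the sign on $I$. Each $\log$-factor is strictly increasing on $I$, so it is bounded between its two endpoint values, which in turn are bounded rationally via Lemma~\ref{fdtr5eyrr5tytrew54ter456tr54r}-style exact computations (or simply quoted as verified decimal enclosures, as the excerpt does throughout). Once the sign of every factor and every monomial combination is pinned down on $I$, quotient-of-bounds manipulations — numerator bounded below/above, denominator bounded above/below, remembering that multiplying by a negative flips the inequality — give the uniform enclosure of each summand.

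Finally I would sum: the lower bound of $B_2(t)$ comes from the lower bounds of the two positive summands (the $\log(t+1)$-term and the $\log(2t+1)$-term, noting the overall sign $(-t+1)^3(3t-1)(3t+1)>0$ and $(t-1)^4>0$ on $I$) together with the upper bound, taken with a minus sign, of the two negative summands (the $\log(3t+1)$-term, whose coefficient $(-t+1)^3(3t+1)>0$, so the summand is negative, and checking the sign of the rational fourth summand), and symmetrically for the upper bound; adding the resulting rational endpoints yields precisely $-0.001491431277 < B_2(t) < -0.001491431155$ for all $t\in I$. The only real obstacle is bookkeeping: getting every factor's sign and monotonicity direction right on $I$, choosing the correct endpoint for each factor when forming the extremal value of a product, and then tracking signs correctly when the summands are combined — but this is exactly the discipline already exercised for $B_0$, so no new idea is needed, and as in the cited lemmas the endpoint evaluations reduce to finite statements about integers that a computer (or a patient reader) can certify.
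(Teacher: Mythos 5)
Your proposal follows essentially the same route as the paper's proof: bound each of the four summands of $B_2$ uniformly on $I$ by determining the monotonicity of the polynomial factors (differentiating and factoring precisely where an increasing and a decreasing factor are multiplied, as the paper does for $(-t+1)^3(3t-1)(3t+1)(t+1)^3$, $(3t+1)^2(-t+1)^6$ and $(t-1)^4(185t^4+698t^3-217t^2-160t+6)$), certify the endpoint values by exact rational arithmetic, combine numerator and denominator bounds, and then add the four enclosures with attention to the minus sign in front of the $\log(3t+1)$-term. One bookkeeping correction, which you yourself flagged as needing a check: the rational fourth summand is positive on $I$ (the paper encloses it near $0.0002446$) and enters $B_2$ with a plus sign, so its lower bound feeds the lower bound of $B_2$; with that fixed, your argument is the paper's argument.
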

\begin{proof}
The plan of the proof is the same as for Lemma~\ref{fdert34w43r4wer5t433453rte54tref}: 
for each of the four summands in $B_2$, 
derive both upper and lower bounds which uniformly hold in $I$. In the end, we add these 
bounds to derive the bounds in \eqref{r34terew5454redf}. 

In the following paragraph, we prove the uniform bounds 
\begin{equation}\label{fdswe54trrew54t54rtfgvcretrds}
0.01786492701 < \tfrac{(-t+1)^3(3t-1)(3t+1)(t+1)^3\log(t+1)}{256t^6} < 0.01786492706 \quad 
\parbox{0.15\linewidth}{for every $t\in I$} \quad . 
\end{equation}
Since $t\mapsto -1+3t^2$ is strictly monotone increasing in $I$, it follows that 
$-1+3t^2 > -1+3\cdot(0.6263716633 - 10^{-10})^2 = 0.17702438137980270272 > 0$ for every $t\in I$, 
and now it is evident from $\tfrac{\upd}{\upd t}$ $(-t+1)^3(3t-1)(3t+1)(t+1)^3$ $=$ 
$-24(-1+t)^2t(1+t)^2(-1+3t^2)$ that $\tfrac{\upd}{\upd t}\ (-t+1)^3(3t-1)(3t+1)(t+1)^3 < 0$ 
for every $t\in I$, i.e., that $t\mapsto (-t+1)^3(3t-1)(3t+1)(t+1)^3$ 
is strictly monotone decreasing in $I$, so 
\begin{align}\label{we34trer564tr43te5e534ter}
    (-t+1)^3(3t-1)(3t+1)(t+1)^3
& > (-t+1)^3(3t-1)(3t+1)(t+1)^3\biggr\rvert_{t=0.6263716633 + 10^{-10}} \notag \\
& > 0.56791522564 \quad\text{for every $t\in I$} \quad , 
\end{align}
and
\begin{align}\label{e2r5t4eq432rewwe24tr3454etrgf}
    (-t+1)^3(3t-1)(3t+1)(t+1)^3
& < (-t+1)^3(3t-1)(3t+1)(t+1)^3\biggr\rvert_{t=0.6263716633 - 10^{-10}} \notag \\
& < 0.56791522584 \quad\text{for every $t\in I$} \quad . 
\end{align}
Since $t\mapsto\log(t+1)$ is strictly monotone increasing in $I$, it follows that 
\begin{align}\label{dqwe432r4ewe4354rw54tr}
    \log(t+1)
& > \log(t+1)\biggr\rvert_{t=0.6263716633 - 10^{-10}} \notag \\
& > 0.48635156016\quad\text{for every $t\in I$} \quad , 
\end{align}
and
\begin{align}\label{fweet45e3w4tr43re5r4te}
    \log(t+1)
& < \log(t+1)\biggr\rvert_{t=0.6263716633 + 10^{-10}} \notag \\
& < 0.48635156029\quad\text{for every $t\in I$} \quad . 
\end{align}
Since $t\mapsto 256 t^6$ is strictly monotone increasing in $I$, it follows that 
\begin{align}\label{d45srt4twe5r4tewe54treer65tyer}
    256 t^6
& > 256 t^6\biggr\rvert_{t=0.6263716633 - 10^{-10}} > 15.46082193821\quad\text{for every $t\in I$} \quad , 
\end{align}
and
\begin{align}\label{s4e2w45435te54trre54tre4rte}
    256 t^6
& < 256 t^6\biggr\rvert_{t=0.6263716633 + 10^{-10}} < 15.46082196784\quad\text{for every $t\in I$} \quad . 
\end{align} 
It follows that, for every $t\in I$, 
\begin{align}\label{q432rew43re4w53r4ew43rew}
  & \tfrac{(-t+1)^3(3t-1)(3t+1)(t+1)^3\log(t+1)}{256t^6}\notag \\
\parbox{0.25\linewidth}{(by \eqref{we34trer564tr43te5e534ter})}
> & \tfrac{0.56791522564\cdot\log(t+1)}{256t^6} \notag \\
\parbox{0.25\linewidth}{(by \eqref{dqwe432r4ewe4354rw54tr})}
> & \tfrac{0.56791522564\cdot 0.48635156016}{256t^6} \notag \\
\parbox{0.25\linewidth}{(by \eqref{s4e2w45435te54trre54tre4rte})}
> & \tfrac{0.56791522564\cdot 0.48635156016}{15.46082196784} \notag \\
> & 0.01786492701\quad ,
\end{align}
proving the lower bound in \eqref{fdswe54trrew54t54rtfgvcretrds}, and also that,
for every $t\in I$, 
\begin{align}\label{dferdse5t4ererw54treeterfd}
  & \tfrac{(-t+1)^3(3t-1)(3t+1)(t+1)^3\log(t+1)}{256t^6}\notag \\
\parbox{0.25\linewidth}{(by \eqref{e2r5t4eq432rewwe24tr3454etrgf})}
< & \tfrac{0.56791522584\cdot\log(t+1)}{256t^6} \notag \\
\parbox{0.25\linewidth}{(by \eqref{fweet45e3w4tr43re5r4te})}
< & \tfrac{0.56791522584\cdot 0.48635156029}{256t^6} \notag \\
\parbox{0.25\linewidth}{(by \eqref{d45srt4twe5r4tewe54treer65tyer})}
< & \tfrac{0.56791522584\cdot 0.48635156029}{15.46082193821} \notag \\
< & 0.01786492706 \quad ,
\end{align}
proving the upper bound in \eqref{fdswe54trrew54t54rtfgvcretrds}. 

In the following paragraph, we prove the uniform bounds 
\begin{equation}\label{ew5r4terer5t465tr}
0.02019321732 < \tfrac{(-t+1)^3(3t+1)\log(3t+1)}{32t^3} < 0.02019321738 \quad 
\parbox{0.15\linewidth}{for every $t\in I$} \quad . 
\end{equation}
Since $\tfrac{\upd}{\upd t} (-t+1)^3(3t+1) = -12(-1+t)^2t < 0$ for every $t\in I$, 
we know that $t\mapsto (-t+1)^3(3t+1)$ is strictly monotone decreasing in $I$, hence 
\begin{align}\label{r54te454t3r65terqe545646yrt}
    (-t+1)^3(3t+1)
& > (-t+1)^3(3t+1)\biggr\rvert_{t=0.6263716633 + 10^{-10}} \notag \\
& > 0.15016835728\quad\text{for every $t\in I$} \quad , 
\end{align}
and
\begin{align}\label{wg5ye4ter42543654t64t5ref43}
    (-t+1)^3(3t+1)
& < (-t+1)^3(3t+1)\biggr\rvert_{t=0.6263716633 - 10^{-10}} \notag \\
& < 0.15016835750 \quad\text{for every $t\in I$} \quad . 
\end{align} 
Since $t\mapsto \log(3t+1)$ is strictly monotone increasing, it follows that for every $t\in I$, 
\begin{equation}\label{et65564tre455345e3465er}
\log(3t+1) > \log(3t+1)\biggr\rvert_{t=0.6263716633 - 10^{-10}} 
> 1.05748295164\quad\text{for every $t\in I$} \quad , 
\end{equation}
and 
\begin{equation}\label{fdret56eterdewre5436e4354}
\log(3t+1) < \log(3t+1)\biggr\rvert_{t=0.6263716633 + 10^{-10}}
< 1.05748295186\quad\text{for every $t\in I$} \quad.  
\end{equation}
Since $t\mapsto 32t^3$ is strictly monotone increasing in $I$, it follows that 
\begin{equation}\label{frw5e4terdsxw43554e}
32t^3 > 32t^3\biggr\rvert_{t=0.6263716632} > 7.86405034017 
\end{equation}
and 
\begin{equation}\label{fdgter54twwqw345443}
32t^3 < 32t^3\biggr\rvert_{t=0.6263716634} < 7.86405034771 \quad . 
\end{equation}
It follows that, for every $t\in I$, 
\begin{align}\label{ader43reew454rerw54354wrw45ew}
  & \tfrac{(-t+1)^3(3t+1)\log(3t+1)}{32t^3}\notag \\
\parbox{0.25\linewidth}{(by \eqref{r54te454t3r65terqe545646yrt})}
> & \tfrac{0.15016835728\cdot\log(3t+1)}{32t^3} \notag \\
\parbox{0.25\linewidth}{(by \eqref{et65564tre455345e3465er})}
> & \tfrac{0.15016835728\cdot 1.05748295164}{32t^3} \notag \\
\parbox{0.25\linewidth}{(by \eqref{fdgter54twwqw345443})}
> & \tfrac{0.15016835728\cdot 1.05748295164}{7.86405034771} \notag \\
> & 0.02019321732\quad ,
\end{align}
proving the lower bound in \eqref{ew5r4terer5t465tr}, and also that, 
for every $t\in I$, 
\begin{align}\label{q23454e2354546etr54re45}
  & \tfrac{(-t+1)^3(3t+1)\log(3t+1)}{32t^3}\notag \\
\parbox{0.25\linewidth}{(by \eqref{wg5ye4ter42543654t64t5ref43})}
< & \tfrac{0.15016835750\cdot\log(3t+1)}{32t^3} \notag \\
\parbox{0.25\linewidth}{(by \eqref{fdret56eterdewre5436e4354})}
< & \tfrac{0.15016835750\cdot 1.05748295186}{32t^3} \notag \\
\parbox{0.25\linewidth}{(by \eqref{frw5e4terdsxw43554e})}
< & \tfrac{0.15016835750\cdot 1.05748295186}{7.86405034017} \notag \\
< & 0.02019321738\quad ,
\end{align}
proving the upper bound in \eqref{ew5r4terer5t465tr}. 

In the following paragraph, we prove the uniform bounds 
\begin{equation}\label{asdw4ewrw5r4wr54re}
0.00059228380 < \tfrac{(3t+1)^2(-t+1)^6\log(2t+1)}{512t^6} < 0.00059228381  \quad 
\parbox{0.15\linewidth}{for every $t\in I$} \quad . 
\end{equation}
Since $\tfrac{\upd}{\upd t}$ $(3t+1)^2(-t+1)^6$ $=$ $24(t-1)^5t(3t+1)$ $<0$ 
for every $t\in I$, we know that $t\mapsto (3t+1)^2(-t+1)^6$ is strictly monotone 
decreasing in $I$, hence 
\begin{align}\label{wer54etwwee4353e4564tr}
    (3t+1)^2(-t+1)^6
& > (3t+1)^2(-t+1)^6\biggr\rvert_{t=0.6263716633 + 10^{-10}} 
> 0.02255053553\quad\text{for every $t\in I$} \quad , 
\end{align}
and
\begin{align}\label{egrt65re54tr654tr54tr4}
    (3t+1)^2(-t+1)^6
& < (3t+1)^2(-t+1)^6\biggr\rvert_{t=0.6263716633 - 10^{-10}} 
< 0.02255053560 \quad\text{for every $t\in I$} \quad . 
\end{align} 
Since $t\mapsto\log(2t+1)$ is strictly monotone increasing in $I$, it follows that 
\begin{equation}\label{rte54rre5er5e4trre5}
\log(2t+1) > \log(2t+1)\biggr\rvert_{t=0.6263716633 - 10^{-10}} > 0.81214872970 
\quad\text{for every $t\in I$}
\end{equation}
and 
\begin{equation}\label{fsgr4tweretew4534re}
\log(2t+1) < \log(2t+1)\biggr\rvert_{t=0.6263716633 + 10^{-10}} < 0.81214872989
\quad\text{for every $t\in I$}\quad . 
\end{equation}
Since $t\mapsto 512 t^6$ is strictly monotone increasing in $I$, it follows that 
\begin{equation}\label{fg5te4rew54trret6y5tre}
512 t^6 > 512 t^6\biggr\rvert_{t=0.6263716633 - 10^{-10}} > 30.92164387643
\quad\text{for every $t\in I$}
\end{equation}
and 
\begin{equation}\label{ert546r5tee54tr3e54e54}
512 t^6 < 512 t^6\biggr\rvert_{t=0.6263716633 + 10^{-10}} < 30.92164393568
\quad\text{for every $t\in I$}\quad . 
\end{equation}
It follows that, for every $t\in I$, 
\begin{align}\label{er534rere54te54rtw454tr}
  & \tfrac{(3t+1)^2(-t+1)^6\log(2t+1)}{512t^6}\notag \\
\parbox{0.25\linewidth}{(by \eqref{wer54etwwee4353e4564tr})}
> & \tfrac{0.02255053553\cdot\log(2t+1)}{512t^6} \notag \\
\parbox{0.25\linewidth}{(by \eqref{rte54rre5er5e4trre5})}
> & \tfrac{0.02255053553\cdot 0.81214872970}{512t^6} \notag \\
\parbox{0.25\linewidth}{(by \eqref{ert546r5tee54tr3e54e54})}
> & \tfrac{0.02255053553\cdot 0.81214872970}{30.92164393568} \notag \\
> & 0.00059228380\quad ,
\end{align}
proving the lower bound in \eqref{asdw4ewrw5r4wr54re}, and, for every $t\in I$, 
\begin{align}\label{we54trer54t56e5tre65tr}
  & \tfrac{(3t+1)^2(-t+1)^6\log(2t+1)}{512t^6}\notag \\
\parbox{0.25\linewidth}{(by \eqref{egrt65re54tr654tr54tr4})}
< & \tfrac{0.02255053560\cdot\log(2t+1)}{512t^6} \notag \\
\parbox{0.25\linewidth}{(by \eqref{fsgr4tweretew4534re})}
< & \tfrac{0.02255053560\cdot 0.81214872989}{512t^6} \notag \\
\parbox{0.25\linewidth}{(by \eqref{fg5te4rew54trret6y5tre})}
< & \tfrac{0.02255053560\cdot 0.81214872989}{30.92164387643} \notag \\
< & 0.00059228381\quad ,
\end{align}
proving the upper bound in \eqref{asdw4ewrw5r4wr54re}. 

In the following paragraph, we prove the uniform bounds 
\begin{equation}\label{ew54tee54rtw4ew354wes}
0.000244575293 < 
\tfrac{(t-1)^4(185t^4+698t^3-217t^2-160t+6)}{1024 t^4(3t+1)(t+3)} < 
0.000244575295 \quad \parbox{0.15\linewidth}{for every $t\in I$} \quad . 
\end{equation}
For every $t\in I$, evidently $(t-1)^3<0$. 
Moreover, since both $t\mapsto 740t^4+2073t^3+92$ and $t\mapsto 1698t^2+183t$ are strictly monotone 
increasing in $I$, we have, for every $t\in I$, 
\begin{align}\label{fret54trdere54te3254w354e}
740t^4+2073t^3+92 
& < 740t^4+2073t^3+92\biggr\rvert_{t=0.6263716633 + 10^{-10}} \notag \\
& = 715.3525597141428299499534408273089356632640 \notag \\
& < 780.82181422656832973952 \notag \\ 
& = 1698t^2+183t\biggr\rvert_{t=0.6263716633 - 10^{-10}} \notag \\ 
& < 1698t^2+183t \quad , 
\end{align}
i.e., $(740t^4+2073t^3-1698t^2-183t+92) < 0$ for every $t\in I$. 
Taken together, it follows that 
$\tfrac{\upd}{\upd t} (t-1)^4(185t^4+698t^3-217t^2-160t+6)$ $=$ 
$2(t-1)^3(740t^4+2073t^3-1698t^2-183t+92)$ $>$ $0$ for every $t\in I$, 
hence $t\mapsto (t-1)^4(185t^4+698t^3-217t^2-160t+6)$ 
is strictly monotone increasing in $I$, so 
\begin{align}\label{ew5t4terwr54te645tr}
   & (t-1)^4(185t^4+698t^3-217t^2-160t+6)   \notag \\
 > & (t-1)^4(185t^4+698t^3-217t^2-160t+6)\biggr\rvert_{t=0.6263716633 - 10^{-10}} \notag \\
 > & 0.40250592053\quad\text{for every $t\in I$} \quad , 
\end{align}
and
\begin{align}\label{rtee65tryer5465t6545}
   & (t-1)^4(185t^4+698t^3-217t^2-160t+6)   \notag \\    
 < & (t-1)^4(185t^4+698t^3-217t^2-160t+6)\biggr\rvert_{t=0.6263716633 + 10^{-10}} \notag \\
 < & 0.40250592191\quad\text{for every $t\in I$} \quad . 
\end{align} 
Since $t\mapsto 1024 t^4(3t+1)(t+3)$ is strictly monotone increasing, we furthermore know 
\begin{align}\label{tr465trr456e443e4trefgds}
   & 1024 t^4(3t+1)(t+3) \notag \\
 > & 1024 t^4(3t+1)(t+3)\biggr\rvert_{t=0.6263716633 - 10^{-10}} \notag \\
 > & 1645.7341777\quad\text{for every $t\in I$} \quad , 
\end{align}
and
\begin{align}\label{wqe3w45etre43rew43rewewt}
   & 1024 t^4(3t+1)(t+3) \notag \\    
 < & 1024 t^4(3t+1)(t+3)\biggr\rvert_{t=0.6263716633 + 10^{-10}} \notag \\
 < & 1645.7341803\quad\text{for every $t\in I$} \quad . 
\end{align} 

It follows that, for every $t\in I$, 
\begin{align}\label{54t5rw54354wr45t4retqw435ew}
  & \tfrac{(t-1)^4(185t^4+698t^3-217t^2-160t+6)}{1024 t^4(3t+1)(t+3)}\notag \\
\parbox{0.25\linewidth}{(by \eqref{ew5t4terwr54te645tr})}
> & \tfrac{0.40250592053}{1024 t^4(3t+1)(t+3)} \notag \\
\parbox{0.25\linewidth}{(by \eqref{wqe3w45etre43rew43rewewt})}
> & \tfrac{0.40250592053}{1645.7341803} \notag \\
> & 0.000244575293\quad ,
\end{align}
proving the lower bound in \eqref{ew54tee54rtw4ew354wes}, and, for every $t\in I$, 
\begin{align}\label{wee24rew343re3w435r4erw453r}
  & \tfrac{(t-1)^4(185t^4+698t^3-217t^2-160t+6)}{1024 t^4(3t+1)(t+3)}\notag \\
\parbox{0.25\linewidth}{(by \eqref{rtee65tryer5465t6545})}
< & \tfrac{0.40250592191}{1024 t^4(3t+1)(t+3)} \notag \\
\parbox{0.25\linewidth}{(by \eqref{tr465trr456e443e4trefgds})}
< & \tfrac{0.40250592191}{1645.7341777} \notag \\
< & 0.000244575295\quad ,
\end{align}
proving the upper bound in \eqref{ew54tee54rtw4ew354wes}.

From the lower bound in \eqref{fdswe54trrew54t54rtfgvcretrds}, 
the upper bound in \eqref{ew5r4terer5t465tr}, and 
the lower bounds in \eqref{asdw4ewrw5r4wr54re} and \eqref{ew54tee54rtw4ew354wes}, 
it follows that, for every $t\in I$, 
\begin{align}\label{derw54tresxwq4t3rewrw5e4trd}
B_2(t) & > 0.01786492701 - 0.02019321738 + 0.00059228380 + 0.000244575293 \notag \\ 
       & = -0.001491431277 \quad , 
\end{align}
proving the lower bound in \eqref{r34terew5454redf}. 

From the upper bound in \eqref{fdswe54trrew54t54rtfgvcretrds}, 
the lower bound in \eqref{ew5r4terer5t465tr}, and 
the upper bounds in \eqref{asdw4ewrw5r4wr54re} and \eqref{ew54tee54rtw4ew354wes}, 
it follows that, for every $t\in I$, 
\begin{align}\label{tr54tyrew43rerw5teew5t4rer5}
B_2(t) & < 0.01786492706 - 0.02019321732 + 0.00059228381 + 0.000244575295 \notag \\ 
       & =  -0.001491431155 \quad , 
\end{align}
proving the upper bound in \eqref{r34terew5454redf}. 
This completes the proof of Lemma~\ref{rew54trrte54ree3w54trwd}. 
\end{proof}

\begin{lemma}[{bounds for $B_2(t_0)$}]
\label{r54ter534re4654ter5434re}
With $B_2$ as in 
Definition~\ref{r54trerewr5w4ytrere5t4rw43re}.\ref{r435terew534w254fe43rew}, 
\begin{equation}\label{fwr554er4trgfewr5etew54rewds}
-0.001491431277 < B_2(t_0) < -0.001491431155
\end{equation}
\end{lemma}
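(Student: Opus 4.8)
The plan is to deduce this the same way the companion statement for $B_0$ was obtained, namely as an immediate corollary of the uniform bounds for $B_2$ together with the localisation of $t_0$ (cf. the proof of Lemma~\ref{fdrte54rdere4rewr544rt}).

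First I would recall from Lemma~\ref{t54treert4re354tree34terfd} that there is a unique $t_0\in(0,1)$ with $Y(t_0)=1$ and that it satisfies $0.6263716633-10^{-10} < t_0 < 0.6263716633+10^{-10}$; in other words $t_0\in I$, where $I:=(0.6263716633-10^{-10},0.6263716633+10^{-10})$ is exactly the interval featuring in Lemma~\ref{rew54trrte54ree3w54trwd}.

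Next I would invoke Lemma~\ref{rew54trrte54ree3w54trwd}, whose conclusion \eqref{r34terew5454redf} states that $-0.001491431277 < B_2(t) < -0.001491431155$ for \emph{every} $t\in I$. Specialising this to the particular point $t=t_0\in I$ yields precisely the inequalities \eqref{fwr554er4trgfewr5etew54rewds}, which completes the argument.

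There is essentially no obstacle here: all the substantive work — the four-summand decomposition of $B_2$, the factorisation and monotonicity analysis of each summand on $I$, and the arithmetic with the rational endpoints of $I$ — has already been carried out in the proof of Lemma~\ref{rew54trrte54ree3w54trwd}, and the single additional ingredient, the membership $t_0\in I$, is supplied verbatim by Lemma~\ref{t54treert4re354tree34terfd}. Thus the proof is a two-line corollary, and the only thing to be careful about is that the numerical endpoints quoted in the statement match those in \eqref{r34terew5454redf}, which they do.
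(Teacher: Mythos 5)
Your argument is exactly the paper's: the paper also deduces the bounds for $B_2(t_0)$ by combining the localisation $t_0\in I$ from Lemma~\ref{t54treert4re354tree34terfd} with the uniform bounds of Lemma~\ref{rew54trrte54ree3w54trwd}. The proposal is correct and matches the paper's proof.
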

\begin{proof}
In view of Lemma~\ref{t54treert4re354tree34terfd}, 
the bounds in \eqref{fwr554er4trgfewr5etew54rewds} follow 
from the uniform bounds in Lemma~\ref{rew54trrte54ree3w54trwd}.
\end{proof}

\begin{lemma}\label{tre54tr54treg53refdrefdw43erd}
\begin{equation}\label{e54trf54tre54tree5t4r}
0.0381910976 = 0.0381910977 - 10^{-10} < R(1) < 0.0381910976 + 10^{-10} < 0.0381910977 \quad . 
\end{equation}
\end{lemma}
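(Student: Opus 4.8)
The plan is to combine the identity $R(1)=\xi(t_0)$ with the strict monotonicity of $\xi$, sharpening the localization of $t_0$ just enough to reach the claimed precision. First I would observe that, since $1\in(0.9999999996,1.00000000009)$ and $t(1)=t_0$ (as noted after Definition~\ref{rew54terer5etwe5redew54re}), Definition~\ref{ftr5y4terwr454et54tre43ew354er54rted} gives $R(1)=\xi(t_0)$ with $\xi$ the rational function of Lemma~\ref{fdg54treert3254rede54terdrw45r657ty}. Next I would show that $\xi$ is strictly monotone decreasing on $(0,1)$: writing $\xi=f/g$ with $f(t)=(3t+1)(-t+1)^3$ and $g(t)=16t^3$, one has $f'(t)=-12t(-t+1)^2$ and $g'(t)=48t^2$, so $f'(t)g(t)-f(t)g'(t)=-48t^2(-t+1)^2(t+1)^2$ and hence $\tfrac{\upd}{\upd t}\xi(t)=-\tfrac{3(-t+1)^2(t+1)^2}{16t^4}<0$ for all $t\in(0,1)$.

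The lower bound in \eqref{e54trf54tre54tree5t4r} then follows at once: by Lemma~\ref{t54treert4re354tree34terfd} we have $t_0<0.6263716633+10^{-10}$, so monotonicity of $\xi$ together with the lower bound of Lemma~\ref{fdg54treert3254rede54terdrw45r657ty}\ref{fdgwret4rgddewr54etrderr54te} gives $R(1)=\xi(t_0)>\xi(0.6263716633+10^{-10})>0.03819109762>0.0381910976$. The upper bound needs more work, because monotonicity of $\xi$ over the whole interval $I$ is \emph{not} enough: already $\xi(0.6263716633-10^{-10})>0.03819109771>0.0381910977$ by Lemma~\ref{fdg54treert3254rede54terdrw45r657ty}\ref{sdwferf54trre5665r45tere64tr}. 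To get around this I would prove the sharper localization $t_0>0.6263716633$. Since $Y$ is strictly increasing on $(0,1)$ by Lemma~\ref{frete54trter46trer5tewre5te5454terfd45etrd} and $Y(t_0)=1$ by Lemma~\ref{t54treert4re354tree34terfd}, this amounts to $Y(0.6263716633)<1$, which I would certify exactly as in \eqref{ewr5464535342543454r54rtds}: enclose the rational numbers $h_1(0.6263716633)$ and $h_2(0.6263716633)$ in short explicit rational intervals (a finite integer computation of the type in Lemma~\ref{fdtr5eyrr5tytrew54ter456tr54r}, but for arguments not among those recorded there), note $h_2(0.6263716633)\in(-0.05,0)$, and then bound $Y(0.6263716633)=-1+h_1(0.6263716633)\exp(h_2(0.6263716633))$ from above using those upper bounds together with the upper Taylor estimate of Lemma~\ref{rw5t4rswre54trt546tr54trde}\ref{w54tryfe54t565tr54trd}, checking that the resulting rational number is $<1$. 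Granting $t_0>0.6263716633$, monotonicity of $\xi$ gives $R(1)=\xi(t_0)<\xi(0.6263716633)$, and an exact rational evaluation of $\xi$ at $0.6263716633$ (of the type in Lemma~\ref{fdg54treert3254rede54terdrw45r657ty}) yields $\xi(0.6263716633)<0.0381910977$; alternatively, convexity of $\xi$ on $I$ (which follows from $\tfrac{\upd}{\upd t}\xi(t)=-\tfrac{3}{16}(t^{-2}-1)^2$ having negative derivative on $(0,1)$) gives $\xi(0.6263716633)<\tfrac12\bigl(\xi(0.6263716633-10^{-10})+\xi(0.6263716633+10^{-10})\bigr)<\tfrac12(0.03819109772+0.03819109763)<0.0381910977$. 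Together with the lower bound this establishes \eqref{e54trf54tre54tree5t4r}.

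The main obstacle is the sharpening step $Y(0.6263716633)<1$: its margin is only of order $10^{-10}$, so the rational enclosures of $h_1(0.6263716633)$ and $h_2(0.6263716633)$ must be computed to roughly that accuracy and one must invoke the $10^{-11}$-accurate degree-$5$ Taylor bound of Lemma~\ref{rw5t4rswre54trt546tr54trde}\ref{w54tryfe54t565tr54trd} (which is exactly what it was calibrated for). Everything else—the sign of $\tfrac{\upd}{\upd t}\xi$, and the two final numerical comparisons—is entirely routine, of the same exact-integer-arithmetic kind used throughout.
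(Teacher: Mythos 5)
Your route genuinely differs from the paper's, and in fact goes further. The paper's proof consists only of your first step: it notes that $\xi$ is strictly monotone decreasing on $(0,1)$ (left as ``routine to check''; your derivative computation supplies it), uses $R(1)=\xi(t_0)$ with the enclosure of $t_0$ from Lemma~\ref{t54treert4re354tree34terfd}, and then quotes the endpoint bounds of Lemma~\ref{fdg54treert3254rede54terdrw45r657ty} --- incidentally, the displayed inequality \eqref{ert46trer54trw4534re54tr54tr54tr} there has its two arguments in the order appropriate for an \emph{increasing} function, so as printed the directions are swapped. Your lower bound argument ($t_0<0.6263716633+10^{-10}$, hence $R(1)>\xi(0.6263716633+10^{-10})>0.03819109762$) is the corrected form of what the paper does. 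For the upper bound you correctly observe that the paper's mechanism cannot reach the stated constant: monotonicity over all of $I$ only gives $R(1)<\xi(0.6263716633-10^{-10})<0.03819109772$, which exceeds $0.0381910976+10^{-10}=0.0381910977$. Your additional sharpening $t_0>0.6263716633$, certified by $Y(0.6263716633)<1$ in the same exact-rational style as \eqref{ewr5464535342543454r54rtds} (this inequality is true, with margin of order $10^{-10}$, so the degree-$5$ Taylor bound of Lemma~\ref{rw5t4rswre54trt546tr54trde}\ref{w54tryfe54t565tr54trd} plus sufficiently tight enclosures of $h_1$ and $h_2$ at $0.6263716633$ do suffice), followed by $R(1)<\xi(0.6263716633)<0.0381910977$, genuinely repairs this, at the cost of new finite certificates not tabulated in the paper. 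In short: the paper's approach buys brevity and reuses only its recorded certificates, but does not actually reach the last digit claimed in \eqref{e54trf54tre54tree5t4r}; yours does. One small slip in your optional shortcut: convexity of $\xi$ on $(0,1)$ does hold, but not because $\xi'(t)=-\tfrac{3}{16}(t^{-2}-1)^2$ has negative derivative --- its derivative is $\tfrac34 t^{-3}(t^{-2}-1)>0$, i.e.\ $\xi'$ is increasing, and that is what yields $\xi$ convex and hence the midpoint estimate you use; a genuinely negative second derivative would flip that inequality. Your primary alternative, a direct exact rational evaluation showing $\xi(0.6263716633)<0.0381910977$, is unaffected by this and is the cleaner way to finish.
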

\begin{proof}
By Definition~\ref{ftr5y4terwr454et54tre43ew354er54rted}, we know that 
with $t_0$ as in Lemma~\ref{rew54terer5etwe5redew54re} we have 
$R(1)$ $=$ $\tfrac{\left(3\cdot t_0 + 1\right)\left(-t_0 + 1\right)^3}{16\cdot t_0^3}$. 
It is routine to check that the function 
$t\mapsto \xi(t):=\tfrac{\left(3\cdot t + 1\right)\left(-t + 1\right)^3}{16\cdot t^3}$ 
is strictly monotone decreasing for $t\in (0,1)$, hence $R(1)=\xi(t_0)$ together with 
the bounds on $t_0$ from \eqref{fregtyess5rtyewe54retfews5t} in 
Lemma~\ref{t54treert4re354tree34terfd} implies  
\begin{equation}\label{ert46trer54trw4534re54tr54tr54tr} 
\xi(0.6263716633 - 10^{-10}) < R(1) < \xi(0.6263716633 + 10^{-10}) \quad ,
\end{equation}
so in \eqref{e54trf54tre54tree5t4r} the lower bound follows from the lower bound 
in Lemma~\ref{fdg54treert3254rede54terdrw45r657ty}.\ref{sdwferf54trre5665r45tere64tr}, 
while the upper bound follows from the upper bound in 
Lemma~\ref{fdg54treert3254rede54terdrw45r657ty}.\ref{fdgwret4rgddewr54etrderr54te}. 
\end{proof}

\begin{lemma}[{exact formula for $\nu$ in terms of $t_0$}]
\label{gr5t34terre5t4564w254} 
With $\rho=\gamma^{-1}$ as in \cite[p.~310]{GimenezNoy}, 
$C$ the exponential generating function of connected labelled planar graphs, 
and with $B_0$ and $B_2$ as in Definition~\ref{r54trerewr5w4ytrere5t4rw43re}, 
and with $R$ as in \cite[(2.6)]{GimenezNoy} and $B_0$ and $B_2$ as in 
Definition~\ref{r54trerewr5w4ytrere5t4rw43re}, 
\begin{equation}\label{rew564treew453r54trdgfrwre54e}
\nu := C(\rho) = R(1) + B_0(t_0) + B_2(t_0)\quad . 
\end{equation}
\end{lemma}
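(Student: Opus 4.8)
The plan is to obtain \eqref{rew564treew453r54trdgfrwre54e} as a specialisation of the explicit singular analysis carried out in \cite{GimenezNoy}; no new analytic input is needed beyond what is cited there and what is already available in the preceding lemmas. Recall that the exponential generating function $C(z)$ of connected labelled planar graphs has its dominant singularity at $z=\rho=\gamma^{-1}$, where it remains finite (the singular exponent being $5/2$), so that $C$ extends continuously to $z=\rho$ and $C(\rho)$ equals the constant term $C_0$ in the singular expansion $C(z)=C_0+C_2X^2+C_4X^4+C_5X^5+\cdots$ written in the local uniformiser $X=\sqrt{1-z/\rho}$. Throughout \cite{GimenezNoy} the relevant generating functions are handled through a rational parametrisation in an auxiliary variable, the variable denoted $t$ here, and the singularity of $C$ corresponds to a parameter value $t=t_0$; that such a $t_0\in(0,1)$ exists, is unique, and is characterised by $Y(t_0)=1$ is Lemma~\ref{t54treert4re354tree34terfd}, and moreover $t_0=t(1)$ for the function $t(\cdot)$ of Definition~\ref{rew54terer5etwe5redew54re}.

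First I would identify the summand $R(1)$ with the ``planar-network part'' of Giménez and Noy's evaluation of $C_0$: by Definition~\ref{ftr5y4terwr454et54tre43ew354er54rted}, $R$ is the function $R(y)$ of \cite[(2.6)]{GimenezNoy} restricted to a neighbourhood of $y=1$, whence $R(1)=\xi(t_0)=\tfrac{(3t_0+1)(1-t_0)^3}{16 t_0^3}$, which is exactly the contribution coming from the parametrisation of networks evaluated at the singular parameter. Next I would identify the two remaining contributions with $B_0(t_0)$ and $B_2(t_0)$: the functions $B_0$ and $B_2$ of Definition~\ref{r54trerewr5w4ytrere5t4rw43re} are, by construction, the two further terms written down on \cite[p.~327]{GimenezNoy} as functions of the parameter $t$, and evaluating them at $t=t_0$ yields the last two summands of $C_0$. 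Adding the three identifications gives $C(\rho)=C_0=R(1)+B_0(t_0)+B_2(t_0)$, which is \eqref{rew564treew453r54trdgfrwre54e}.

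The step that requires the most care is not analytic but a matter of bookkeeping: one must pin down the dictionary between our notation and that of \cite{GimenezNoy} and, in particular, verify that no additive constant has been dropped. The function $C$ is recovered in \cite{GimenezNoy} by integrating a derivative, an operation fixed by the normalisation $C(0)=0$, and one has to be certain that it is this normalisation which makes the constant term of the singular expansion equal to the stated sum, rather than that sum shifted by a spurious constant of integration; similarly one must check that the parameter is specialised to the \emph{same} value $t_0$ in each of the three summands, using $t_0=t(1)$. Once this reconciliation has been carried out, \eqref{rew564treew453r54trdgfrwre54e} follows by direct substitution.
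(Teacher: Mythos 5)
Your proposal is correct and is in essence the same argument as the paper's: the paper simply cites \cite[p.~321, (4.7)]{GimenezNoy} (together with the equation immediately above it), which is precisely the Gim\'enez--Noy evaluation of the constant term of the singular expansion of $C$ at $\rho$ that you reconstruct. Your additional remarks on the normalisation $C(0)=0$ and on matching the parameter value $t_0=t(1)$ across the three summands are sensible bookkeeping but add nothing beyond what the cited equation already encapsulates.
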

\begin{proof}
See \cite[p.~321,~(4.7)]{GimenezNoy}, together with the equation immediately above that. 
\end{proof}

\begin{lemma}[{verified bounds for $\nu$}]
\label{54re4565rt43refd54ert5645344545terdfx545454}
The real number $\nu$ defined in \cite{GimenezNoy} satisfies 
\begin{equation}\label{rew45tre54tre54tredswe43eds}
0.037439365283 < \nu < 0.037439366735\quad . 
\end{equation}
\end{lemma}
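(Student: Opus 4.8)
The plan is to combine the exact expression for $\nu$ from Lemma~\ref{gr5t34terre5t4564w254} with the three verified interval bounds already established; no further analytic input is required. By Lemma~\ref{gr5t34terre5t4564w254} we have $\nu = R(1) + B_0(t_0) + B_2(t_0)$, so it suffices to add the three lower bounds to obtain a lower bound for $\nu$, and the three upper bounds to obtain an upper bound.

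First I would recall the bound $0.0381910976 < R(1) < 0.0381910977$ from Lemma~\ref{tre54tr54treg53refdrefdw43erd}, then $0.00073969896 < B_0(t_0) < 0.00073970019$ from Lemma~\ref{fdrte54rdere4rewr544rt}, and finally $-0.001491431277 < B_2(t_0) < -0.001491431155$ from Lemma~\ref{r54ter534re4654ter5434re}. Adding the three lower bounds yields
\[
\nu \;>\; 0.0381910976 + 0.00073969896 - 0.001491431277 \;=\; 0.037439365283,
\]
while adding the three upper bounds yields
\[
\nu \;<\; 0.0381910977 + 0.00073970019 - 0.001491431155 \;=\; 0.037439366735,
\]
which together are exactly the assertion \eqref{rew45tre54tre54tredswe43eds}.

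I do not expect any genuine obstacle here: the only points requiring care are the bookkeeping of signs (all three summands enter $\nu$ additively, but $B_2(t_0)$ is itself negative, so its lower bound feeds the lower bound for $\nu$ and its upper bound feeds the upper bound) and the exact evaluation of the two finite decimal sums displayed above, which — as with the other numerical claims in this document — can be certified by arithmetic with arbitrarily long integers. In particular, continuity, monotonicity, and the localization of $t_0$ are all already packaged inside the cited lemmas, so nothing beyond an addition of rationals remains.
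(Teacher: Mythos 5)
Your proposal is correct and coincides with the paper's own proof: both invoke the exact decomposition $\nu = R(1) + B_0(t_0) + B_2(t_0)$ from Lemma~\ref{gr5t34terre5t4564w254} and then add the verified lower (resp.\ upper) bounds from Lemmas~\ref{tre54tr54treg53refdrefdw43erd}, \ref{fdrte54rdere4rewr544rt} and \ref{r54ter534re4654ter5434re}, with the sign of $B_2(t_0)$ handled exactly as you describe. The two decimal sums you display are evaluated correctly, so nothing is missing.
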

\begin{proof}
The lower bound follows from 
\begin{align}\label{defwr5tr4twsrewt5r4etewrewfd}
\nu & \By{\eqref{rew564treew453r54trdgfrwre54e}}{=}  R(1) + B_0(t_0) + B_2(t_0) \notag \\
\parbox{0.3\linewidth}{(by Lemmas~\ref{tre54tr54treg53refdrefdw43erd}, 
\ref{fdrte54rdere4rewr544rt} and \ref{r54ter534re4654ter5434re})} 
& > 0.0381910976 + 0.00073969896 + (-0.001491431277) \notag \\
& = 0.037439365283
\end{align}
and the upper bound from 
\begin{align}\label{rty54tr4w35re53r4er65w4terfd}
\nu & \By{\eqref{rew564treew453r54trdgfrwre54e}}{=}  R(1) + B_0(t_0) + B_2(t_0) \notag \\
\parbox{0.3\linewidth}{(by Lemmas~\ref{tre54tr54treg53refdrefdw43erd}, 
\ref{fdrte54rdere4rewr544rt} and \ref{r54ter534re4654ter5434re})} 
& < 0.0381910977 + 0.00073970019 + (-0.001491431155) \notag \\
& = 0.037439366735 \quad .  
\end{align}
\end{proof}

\begin{definition}[{$A(t)$, $\rho(t)$}]
\label{dfter54trrw54tt56356rt} 
With 
\begin{align}\label{fdret54rswe5r4we5t4rew54terwe543re}
A(t) := \tfrac{(3t-1)(t+1)^3\log(t+1)}{16t^3} + \tfrac{(3t+1)(-t+1)^3\log(2t+1)}{32t^3} + 
\tfrac{(-t+1)(185t^4+698t^3-217t^2-160t+6)}{64t(3t+1)^2(t+3)}
\end{align}
and
\begin{equation}\label{re54terdfrw45r4wrw54rewr43rew453re}
r(t) := \tfrac{1}{16}(3t+1)^{\frac12}(t^{-1}-1)^3\exp(A(t))
\end{equation}
we define
\begin{equation}\label{we54tyrerew5reswr5treew5red}
\rho := r(t_0)
\end{equation}
\end{definition}
\begin{proof}
See \cite[p.~310]{GimenezNoy}. 
\end{proof}

\begin{lemma}[{uniform bounds for $A(t)$}]
\label{rt5etr3e54tred}
With $A$ as in Definition~\ref{dfter54trrw54tt56356rt}, 
\begin{equation}\label{retw54tre45re5465} 
0.48968967248 < A(t) < 0.48968967363 
\end{equation}
for every $t\in I:=(0.6263716633 - 10^{-10},0.6263716633 + 10^{-10})$.
\end{lemma}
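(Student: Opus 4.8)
The plan is to proceed exactly as in the proofs of Lemmas~\ref{fdert34w43r4wer5t433453rte54tref} and~\ref{rew54trrte54ree3w54trwd}: for each of the three summands of $A$ I derive a two-sided bound that holds uniformly on $I$, and then add the three lower bounds and the three upper bounds. Since all three summands enter $A$ with a plus sign, no attention to signs is needed in the final addition; moreover each summand turns out to be positive throughout $I$, so --- unlike in the proof of Lemma~\ref{fdert34w43r4wer5t433453rte54tref} --- no inequality has to be flipped upon dividing, which makes the present argument, if anything, the most straightforward of the three.

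First I settle the relevant monotonicities on $I$. For the first summand $\tfrac{(3t-1)(t+1)^3\log(t+1)}{16t^3}$ the numerator is a product of three positive, strictly increasing functions on $I$ (for $3t-1$ this uses $t>\tfrac13$), hence is strictly increasing, while the denominator $16t^3$ is positive and strictly increasing. For the second summand $\tfrac{(3t+1)(-t+1)^3\log(2t+1)}{32t^3}$ I use the factorization $\tfrac{\upd}{\upd t}(3t+1)(-t+1)^3=-12t(-t+1)^2$, which is negative on $I$, so $t\mapsto(3t+1)(-t+1)^3$ is positive and strictly decreasing there, while $\log(2t+1)$ and $32t^3$ are positive and strictly increasing. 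For the third summand $\tfrac{(-t+1)(185t^4+698t^3-217t^2-160t+6)}{64t(3t+1)^2(t+3)}$ the denominator $64t(3t+1)^2(t+3)$ is positive and strictly increasing, and for the numerator I compute, in the spirit of the corresponding step in the proof of Lemma~\ref{rew54trrte54ree3w54trwd}, that $\tfrac{\upd}{\upd t}\bigl[(-t+1)(185t^4+698t^3-217t^2-160t+6)\bigr]=-925t^4-2052t^3+2745t^2-114t-166$, and I verify that this is positive on $I$, equivalently that $2745t^2>925t^4+2052t^3+114t+166$ on $I$; both sides being strictly increasing on $I$, this reduces to comparing their values at the two endpoints $0.6263716633\pm10^{-10}$. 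Along the way one also records that $-t+1$ and the quartic $185t^4+698t^3-217t^2-160t+6$ are both positive on $I$ (the quartic is close to $20$ there), so that this numerator is positive and strictly increasing on $I$.

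With these monotonicities in hand, each summand $N(t)/D(t)$ with $N,D>0$ on $I$ satisfies $\tfrac{\min_I N}{\max_I D}\le\tfrac{N(t)}{D(t)}\le\tfrac{\max_I N}{\min_I D}$ for every $t\in I$, where, for a numerator written as a product of monotone positive factors, $\min_I N$ (and likewise $\max_I N$) is the product, factor by factor, of the values at the appropriate endpoint of $I$. Evaluating all these endpoint values is a finite, hand-checkable computation of exactly the kind carried out in Lemmas~\ref{fdtr5eyrr5tytrew54ter456tr54r}, \ref{fdg54treert3254rede54terdrw45r657ty}, \ref{45ewrtewswrertewdsxz} and~\ref{re54ter354etr5345trd}; in particular the bounds for $\log(t+1)$ and $\log(2t+1)$ over $I$ needed here already appear verbatim in the proof of Lemma~\ref{rew54trrte54ree3w54trwd}. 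Adding the three uniform lower bounds then gives $0.48968967248<A(t)$, and adding the three uniform upper bounds gives $A(t)<0.48968967363$, for every $t\in I$. The only step that is not entirely mechanical is pinning down the sign of $-925t^4-2052t^3+2745t^2-114t-166$ on $I$ (the derivative of the numerator of the third summand); but this sign is robust --- the gap between the two polynomials being compared is of order $10^{2}$, while each side varies by at most order $10^{-6}$ across the $10^{-10}$-wide interval $I$ --- so it, too, comes down to a finite endpoint computation.
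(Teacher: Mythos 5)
Your proposal is correct and follows essentially the same route as the paper's proof: the same three-summand decomposition, the same monotonicity arguments (including the derivative computation $-925t^4-2052t^3+2745t^2-114t-166$ and its positivity on $I$ via endpoint comparison of $2745t^2$ against $925t^4+2052t^3+114t+166$), reuse of the logarithm bounds established earlier, and the same final additions yielding $0.48968967248$ and $0.48968967363$. The only difference is presentational: the paper writes out the explicit intermediate decimal bounds for each summand, while you leave those endpoint evaluations as the finite hand-checkable computations they are.
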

\begin{proof}
The structure of the proof is analogous to the proofs of 
Lemmas~\ref{fdert34w43r4wer5t433453rte54tref} and \ref{rew54trrte54ree3w54trwd}. 

In the following paragraph, we prove the uniform bounds 
\begin{equation}\label{fdwter54rerew5r4ewered}
0.46777725975 < \tfrac{(3t-1)(t+1)^3\log(t+1)}{16t^3} < 0.46777726082 \quad 
\text{for every $t\in I$} \quad . 
\end{equation}
Because of $\tfrac{\upd}{\upd t}\ (3t-1)(t+1)^3$ $=$ $12t(t+1)^2$ $>$ $0$ for every $t\in I$, 
we know that $t\mapsto (3t-1)(t+1)^3$ is strictly monotone increasing in $I$ and hence 
\begin{align}\label{r54trwr54t3e3425ter334e54trgfdscx}
    (3t-1)(t+1)^3 & > (3t-1)(t+1)^3\biggr\rvert_{t=0.6263716632} 
  > 3.78185681259 \quad\text{for every $t\in I$} \quad , 
\end{align}
and
\begin{align}\label{354tr4erttyrgd4ewrdsrefd54terer}
    (3t-1)(t+1)^3 & < (3t-1)(t+1)^3\biggr\rvert_{t=0.6263716634} 
  < 3.78185681657\quad\text{for every $t\in I$} \quad . 
\end{align} 
Since $t\mapsto 16 t^3$ is strictly monotone increasing, 
\begin{align}\label{5t4rge544ret32w54wrew5}
    16t^3 & > 16t^3\biggr\rvert_{t=0.6263716632} 
  > 3.93202517008 \quad\text{for every $t\in I$} \quad , 
\end{align}
and
\begin{align}\label{er54trefte5645trgfc}
    16t^3 & < 16t^3\biggr\rvert_{t=0.6263716634} 
  < 3.93202517386 \quad\text{for every $t\in I$} \quad . 
\end{align} 
From \eqref{r54trwr54t3e3425ter334e54trgfdscx}, 
\eqref{dqwe432r4ewe4354rw54tr} and \eqref{er54trefte5645trgfc} follows 
$((3t-1)(t+1)^3\cdot\log(t+1))/(16t^3)$ $>$ $3.78185681259$ $\cdot$ $0.48635156016$ $/$ 
$3.93202517386$ $>$ $0.46777725975$ for every $t\in I$, 
proving the lower bound in \eqref{fdwter54rerew5r4ewered}. 
From \eqref{354tr4erttyrgd4ewrdsrefd54terer}, 
\eqref{fweet45e3w4tr43re5r4te} and \eqref{5t4rge544ret32w54wrew5} follows 
$((3t-1)(t+1)^3\cdot\log(t+1))/(16t^3)$ $<$ $3.78185681657$ $\cdot$ $0.48635156029$ $/$ 
$3.93202517008$ $<$ $0.46777726082$ for every $t\in I$, 
proving the upper bound in \eqref{fdwter54rerew5r4ewered}. 

In the following paragraph, we prove the uniform bounds 
\begin{equation}\label{4e3wrer544red}
0.01550842571 < \tfrac{(3t+1)(-t+1)^3\log(2t+1)}{32t^3} < 0.01550842575 \quad 
\text{for every $t\in I$} \quad . 
\end{equation}

Since $\tfrac{\upd}{\upd t}\ (3t+1)(-t+1)^3$ $=$ $-12 t (t-1)^2$ $<$ $0$ for every $t\in I$, 
we know that $t\mapsto (3t+1)(-t+1)^3$ is strictly monotone decreasing in $I$, so 
\begin{align}\label{43wrte453534654rewda}
    (3t+1)(-t+1)^3 & > (3t+1)(-t+1)^3\biggr\rvert_{t=0.6263716634} 
  > 0.15016835728 \quad\text{for every $t\in I$} \quad , 
\end{align}
and
\begin{align}\label{w43wrer454wr544re4rw}
    (3t+1)(-t+1)^3 & < (3t+1)(-t+1)^3\biggr\rvert_{t=0.6263716632} 
  < 0.15016835750 \quad\text{for every $t\in I$} \quad . 
\end{align} 
From \eqref{43wrte453534654rewda}, \eqref{dew54retw54trt5e4tyrgfd} 
and \eqref{fdgter54twwqw345443} it follows that 
$((3t+1)(-t+1)^3\cdot\log(2t+1))/(32t^3)$ $>$ $0.15016835728$ $\cdot$ $0.81214872970$ $/$ 
$7.86405034771$ $>$ $0.01550842571$, proving the lower bound in \eqref{4e3wrer544red}.
From \eqref{w43wrer454wr544re4rw}, \eqref{d5645tw54te54erte5w4tref} 
and \eqref{frw5e4terdsxw43554e} it follows that 
$((3t+1)(-t+1)^3\cdot\log(2t+1))/(32t^3)$ $<$ $0.15016835750$ $\cdot$ $0.81214872989$ $/$ 
$7.86405034017$ $<$ $0.01550842575$. 

In the following paragraph, we prove the uniform bounds 
\begin{equation}\label{erw54treer4ew5tre45terfds}
0.00640398702 
< \tfrac{(-t+1)(185t^4+698t^3-217t^2-160t+6)}{64t(3t+1)^2(t+3)} < 
0.00640398706 \quad \text{for every $t\in I$} \quad . 
\end{equation}
Since both $t\mapsto 2745t^2$ and $t\mapsto 925t^4 + 2052t^3 + 114t + 166$ are strictly 
monotone increasing in $I$, we have 
$2745t^2$ $>$ $2745t^2\biggr\rvert_{t=0.6263716632}$ $>$ 
$1076.97730896$ $>$ $884.07553334$ $>$ 
$925t^4 + 2052t^3 + 114t + 166\biggr\rvert_{t=0.6263716634}$ $>$ $925t^4 + 2052t^3 + 114t + 166$ 
for every $t\in I$, hence $\tfrac{\upd}{\upd t} (-t+1)(185t^4+698t^3-217t^2-160t+6)$ $=$ 
$-925t^4-2052t^3+2745t^2-114t-166$ $>$ $0$ for every $t\in I$. Therefore, 
$t\mapsto (-t+1)(185t^4+698t^3-217t^2-160t+6)$ is strictly monotone increasing in $I$, so 
\begin{align}\label{ewr54treedew5r4tewe43reds}
  &  (-t+1)(185t^4+698t^3-217t^2-160t+6) \notag \\
> & (-t+1)(185t^4+698t^3-217t^2-160t+6)\biggr\rvert_{t=0.6263716632} \notag \\
> & 7.71707734263 \quad\text{for every $t\in I$} \quad , 
\end{align}
and
\begin{align}\label{erw5se435e6r76uythg}
  &  (-t+1)(185t^4+698t^3-217t^2-160t+6) \notag \\  
< & (-t+1)(185t^4+698t^3-217t^2-160t+6)\biggr\rvert_{t=0.6263716634} \notag \\
< & 7.71707738122 \quad\text{for every $t\in I$} \quad . 
\end{align} 
Since $t\mapsto 64t(3t+1)^2(t+3)$ is strictly monotone increasing in $I$, we have 
\begin{align}\label{er54we32e5t3wrewe43errew53r4ew53r4e}
  64t(3t+1)^2(t+3) > 64t(3t+1)^2(t+3)\biggr\rvert_{t=0.6263716633 - 10^{-10}} 
> 1205.0426269\quad\text{for every $t\in I$} \quad , 
\end{align}
and
\begin{align}\label{srteewsdterdgsdrefdsrtefdsfdtrdgste}
  64t(3t+1)^2(t+3) < 64t(3t+1)^2(t+3)\biggr\rvert_{t=0.6263716633 + 10^{-10}} 
< 1205.0426279\quad\text{for every $t\in I$} \quad . 
\end{align} 
From \eqref{ewr54treedew5r4tewe43reds} and \eqref{srteewsdterdgsdrefdsrtefdsfdtrdgste} 
it follows that $(-t+1)(185t^4+698t^3-217t^2-160t+6)/(64t(3t+1)^2(t+3))$ $>$ 
$7.71707734263$ $/$ $1205.0426279$ $>$ $0.00640398702$, proving the 
lower bound in \eqref{erw54treer4ew5tre45terfds}. From \eqref{erw5se435e6r76uythg} 
and \eqref{er54we32e5t3wrewe43errew53r4ew53r4e} it follows that 
$(-t+1)(185t^4+698t^3-217t^2-160t+6)/(64t(3t+1)^2(t+3))$ $<$  
$7.71707738122$ $/$ $1205.0426269$ $<$ $0.00640398706$, 
proving the upper bound in \eqref{erw54treer4ew5tre45terfds}.

In view of Definition~\ref{dfter54trrw54tt56356rt}, the lower bounds in 
\eqref{fdwter54rerew5r4ewered}, \eqref{4e3wrer544red} and \eqref{erw54treer4ew5tre45terfds} 
imply that for every $t\in I$, 
\begin{equation}\label{r54yertfdswrewerewsed}
A(t) > 0.46777725975 + 0.01550842571 + 0.00640398702 = 0.48968967248\quad , 
\end{equation}
proving the lower bound in \eqref{retw54tre45re5465}, while the upper bounds in 
\eqref{fdwter54rerew5r4ewered}, \eqref{4e3wrer544red} and \eqref{erw54treer4ew5tre45terfds} imply 
that for every $t\in I$, 
\begin{equation}\label{56465tret645terf}
A(t) < 0.46777726082 + 0.01550842575 + 0.00640398706 = 0.48968967363 \quad , 
\end{equation}
proving the upper bound in \eqref{retw54tre45re5465}. 
\end{proof}

\begin{lemma}[{bounds for $A(t_0)$}]\label{fwer54trgfrew54r4ew2ed}
With $A(t)$ as in Definition~\ref{dfter54trrw54tt56356rt} and 
$t_0$ as in Definition~\ref{t54treert4re354tree34terfd}, 
\begin{equation}\label{frte4rre5t4re564t5reretr} 
0.48968967248 < A(t_0) < 0.48968967363 \quad . 
\end{equation}
\end{lemma}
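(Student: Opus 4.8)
The plan is to deduce this immediately from two results already established: the localisation of $t_0$ in Lemma~\ref{t54treert4re354tree34terfd} and the uniform bounds for $A$ in Lemma~\ref{rt5etr3e54tred}. This is the exact analogue of how Lemma~\ref{fdrte54rdere4rewr544rt} was obtained from Lemma~\ref{fdert34w43r4wer5t433453rte54tref}, and of how Lemma~\ref{r54ter534re4654ter5434re} was obtained from Lemma~\ref{rew54trrte54ree3w54trwd}: once one has a uniform estimate over an interval and a proof that the point of interest lies in that interval, the pointwise estimate is a triviality.

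Concretely, I would first recall that by Lemma~\ref{t54treert4re354tree34terfd} the number $t_0$ satisfies $0.6263716633 - 10^{-10} < t_0 < 0.6263716633 + 10^{-10}$, i.e.\ $t_0\in I$ with $I:=(0.6263716633 - 10^{-10},0.6263716633 + 10^{-10})$ being precisely the interval occurring in the statement of Lemma~\ref{rt5etr3e54tred}. Then I would apply Lemma~\ref{rt5etr3e54tred} at the single point $t=t_0$: since that lemma gives $0.48968967248 < A(t) < 0.48968967363$ for \emph{every} $t\in I$, it holds in particular for $t=t_0$, which is exactly the asserted inequality \eqref{frte4rre5t4re564t5reretr}.

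I do not expect any obstacle at all: all the genuine effort — the summand-by-summand monotonicity arguments together with the interval-arithmetic bookkeeping — was already expended in the proof of Lemma~\ref{rt5etr3e54tred}. The only point requiring a moment's attention is the bookkeeping one of checking that the interval $I$ appearing in Lemma~\ref{rt5etr3e54tred} coincides with the interval shown to contain $t_0$ in Lemma~\ref{t54treert4re354tree34terfd} (and with the one used to \emph{define} $t$ in Definition~\ref{rew54terer5etwe5redew54re}); this holds by construction, so the two lemmas dovetail without any loss in the constants.
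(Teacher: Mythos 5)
Your proposal is correct and coincides with the paper's own argument: the paper also deduces the bounds on $A(t_0)$ immediately from the localisation of $t_0$ in Lemma~\ref{t54treert4re354tree34terfd} together with the uniform bounds of Lemma~\ref{rt5etr3e54tred} applied at $t=t_0$. Nothing further is needed.
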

\begin{proof}
Immediate from Lemmas~\ref{t54treert4re354tree34terfd} and \ref{rt5etr3e54tred}. 
\end{proof}

\begin{lemma}[{uniform bounds for $r(t)$}]
\label{ert54yrdfew4544r5etr3e4tewrfds}
With $r(t)$ as in Definition~\ref{dfter54trrw54tt56356rt} 
and $I$ as in Lemma~\ref{fdert34w43r4wer5t433453rte54tref}, 
\begin{equation}\label{re54trwrr54terr3454etr4354rte}
0.03672841251 < r(t) < 0.03672841266\quad \text{for every $t\in I$}\quad . 
\end{equation}
\end{lemma}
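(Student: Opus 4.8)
The plan is to follow the pattern of the proofs of Lemmas~\ref{fdert34w43r4wer5t433453rte54tref}, \ref{rew54trrte54ree3w54trwd} and \ref{rt5etr3e54tred}: write $16\,r(t)=(3t+1)^{1/2}\cdot\bigl(t^{-1}-1\bigr)^{3}\cdot\exp\bigl(A(t)\bigr)$, derive rational upper and lower bounds valid for every $t\in I$ for each of the three factors separately, and then multiply the three pairs of bounds together (all three factors are positive on $I$) and divide by $16$ to obtain \eqref{re54trwrr54terr3454etr4354rte}.

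For the factor $\bigl(t^{-1}-1\bigr)^{3}=\bigl(\tfrac1t-1\bigr)^{3}$: since $\tfrac{\upd}{\upd t}\bigl(\tfrac1t-1\bigr)=-t^{-2}<0$ for every $t\in I$, the map $t\mapsto\tfrac1t-1$ is strictly monotone decreasing and positive on $I$, hence so is its cube, so evaluating at the two (rational) endpoints of $I$ already gives rational uniform bounds. For the factor $\exp(A(t))$: by Lemma~\ref{rt5etr3e54tred} we have $A(t)\in(0.48968967248,0.48968967363)\subseteq(0.48,0.49)$ for every $t\in I$, so, since $\exp$ is monotone increasing, applying the two-sided Taylor estimate of Lemma~\ref{rw5t4rswre54trt546tr54trde}, case~\ref{dft546rte54ee54564544}, at $x=0.48968967248$ and at $x=0.48968967363$ yields rational numbers $\ell<\exp(A(t))<u$ valid throughout $I$, namely the degree-$11$ Taylor polynomial $\sum_{0\le i\le 11}x^{i}/i!$ at these two rational points corrected by the stated error constants $0.39995948109\cdot10^{-12}$ and $0.11998784433\cdot10^{-11}$.

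The one genuinely new ingredient, and the step I expect to be the main obstacle, is the irrational factor $(3t+1)^{1/2}$, which must be squeezed between rationals without leaving the ``rational certificate'' framework used throughout this document. I would use that $t\mapsto 3t+1$ is strictly monotone increasing, so that $3t+1$ takes values in the exactly rational interval between $(3t+1)\bigr\rvert_{t=0.6263716633-10^{-10}}$ and $(3t+1)\bigr\rvert_{t=0.6263716633+10^{-10}}$; then for positive rationals $a$ and $b$ one has $a<(3t+1)^{1/2}<b$ for every $t\in I$ as soon as $a^{2}<(3t+1)\bigr\rvert_{t=0.6263716633-10^{-10}}$ and $(3t+1)\bigr\rvert_{t=0.6263716633+10^{-10}}<b^{2}$, and these are finite statements about integers, hence admissible certificates. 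The real difficulty is quantitative rather than conceptual: a quick error estimate shows that the variation of $(t^{-1}-1)^{3}$ over $I$, together with the slack inherited from Lemma~\ref{rt5etr3e54tred} through $\exp(A(t))$, already accounts for almost all of the width $1.5\cdot10^{-10}$ of the target interval in \eqref{re54trwrr54terr3454etr4354rte}, so $a$ and $b$ have to be chosen with about eleven correct decimal digits, and — exactly as in the proofs of Lemmas~\ref{fdert34w43r4wer5t433453rte54tref} and \ref{rew54trrte54ree3w54trwd} — all the intermediate quotients and products must be carried with enough digits that the cumulative rounding loss stays comfortably below $10^{-11}$.

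Finally, I would assemble the bounds: the lower bound for $r(t)$ is $\tfrac1{16}$ times the product of the three lower bounds, the upper bound is $\tfrac1{16}$ times the product of the three upper bounds, and a direct rational computation then checks that these are respectively larger than $0.03672841251$ and smaller than $0.03672841266$, proving \eqref{re54trwrr54terr3454etr4354rte}. As a variant one may instead decompose $r(t)=\xi(t)\cdot(3t+1)^{-1/2}\cdot\exp(A(t))$ with $\xi$ as in Lemma~\ref{fdg54treert3254rede54terdrw45r657ty}, reuse the uniform bounds $0.03819109762<\xi(t)<0.03819109772$ for $t\in I$ (which follow from the strict monotonicity of $\xi$ on $(0,1)$ noted in the proof of Lemma~\ref{tre54tr54treg53refdrefdw43erd} together with the pointwise bounds of Lemma~\ref{fdg54treert3254rede54terdrw45r657ty}), and handle $(3t+1)^{-1/2}$ by the same squaring argument, now in the form $p^{2}(3t+1)<1<q^{2}(3t+1)$; this route merely relocates, but does not eliminate, the single unavoidable rational approximation of a square root.
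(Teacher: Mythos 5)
Your proposal is correct and follows essentially the same route as the paper's proof: factor $r(t)$ into $\tfrac{1}{16}(3t+1)^{1/2}$, $(t^{-1}-1)^3$ and $\exp(A(t))$, bound the first two by monotonicity at the rational endpoints of $I$, bound the third by combining the uniform bounds on $A(t)$ from Lemma~\ref{rt5etr3e54tred} with the degree-$11$ Taylor estimate \ref{dft546rte54ee54564544}.\ref{fter46tyr54trer54tr54tr54ter} of Lemma~\ref{rw5t4rswre54trt546tr54trde}, and multiply the resulting bounds. Your explicit squaring trick for certifying rational bounds on $(3t+1)^{1/2}$ only spells out what the paper leaves implicit in its endpoint evaluations, so there is no substantive difference.
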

\begin{proof}
Since $t\mapsto\tfrac{1}{16}(3t+1)^{\frac12}$ is evidently strictly monotone increasing in $I$, 
\begin{equation}\label{rt5e46trere5t4rwe54terwds}
  \tfrac{1}{16}(3t+1)^{\frac12} 
> \tfrac{1}{16}(3t+1)^{\frac12}\biggr\rvert_{t=0.6263716633 - 10^{-10}} 
> 0.10604971913\quad\text{for every $t\in I$}
\end{equation}
and 
\begin{equation}\label{try465treer454rteret54tree54ree}
  \tfrac{1}{16}(3t+1)^{\frac12} 
< \tfrac{1}{16}(3t+1)^{\frac12}\biggr\rvert_{t=0.6263716633 + 10^{-10}} 
< 0.10604971915\quad\text{for every $t\in I$} \quad .  
\end{equation}

Since $t\mapsto t^{-1}-1$ is strictly monotone decreasing in $I$, so is $t\mapsto(t^{-1}-1)^3$, 
hence 
\begin{equation}\label{fdgrte5464e4535454ew}
   (t^{-1}-1)^3 > (t^{-1}-1)^3\biggr\rvert_{t=0.6263716633 + 10^{-10}} 
               > 0.21223798428 \quad\text{for every $t\in I$}
\end{equation}
and 
\begin{equation}\label{fert56terdew5454tr564t5er5}
   (t^{-1}-1)^3 < (t^{-1}-1)^3\biggr\rvert_{t=0.6263716633 - 10^{-10}} 
               < 0.21223798483\quad\text{for every $t\in I$} \quad . 
\end{equation}

Combining Lemma~\ref{fwer54trgfrew54r4ew2ed} with
 \ref{dft546rte54ee54564544}.\ref{fter46tyr54trer54tr54tr54ter} 
in Lemma~\ref{rw5t4rswre54trt546tr54trde}, and since $\exp$ is strictly monotone increasing, 
it follows that, for every $t\in I$, 
\begin{align}\label{fre54tergsdre5terrte5465454535terf}
\exp(A(t)) & > \exp(0.48968967248) \notag \\
           & > 0.39995948109\cdot 10^{-12} + \sum_{0\leq i \leq 11} (0.48968967248)^i/i! \notag \\
           & > 1.63180974590 \quad ,  
\end{align}
and, again for every $t\in I$, 
\begin{align}\label{fre5t465terrt54t5654654tyr746ytr5t}
\exp(A(t)) & < \exp(0.48968967363) \notag \\
           & < 0.11998784433\cdot 10^{-11} + \sum_{0\leq i \leq 11} (0.48968967363)^i/i! \notag \\
           & < 1.63180974778 \quad . 
\end{align}
It follows that, for every $t\in I$, 
\begin{align}\label{5trrw54t5tt5645t43r54retdf} 
r(t) & = \tfrac{1}{16}(3t+1)^{\frac12}(t^{-1}-1)^3\exp(A(t)) \notag \\
\parbox{0.1\linewidth}{\eqref{rt5e46trere5t4rwe54terwds}} & >  
0.10604971913\cdot(t^{-1}-1)^3\exp(A(t)) \notag \\
 \parbox{0.1\linewidth}{\eqref{fdgrte5464e4535454ew}} & >  
0.10604971913\cdot 0.21223798428\cdot\exp(A(t)) \notag \\ 
 \parbox{0.1\linewidth}{\eqref{fre54tergsdre5terrte5465454535terf}} & >  
0.10604971913\cdot 0.21223798428\cdot 1.63180974590 \notag \\ 
& > 0.03672841251\quad ,
\end{align}
proving the lower bound in \eqref{re54trwrr54terr3454etr4354rte}, and, for every $t\in I$, 
\begin{align}\label{dfer5y4tr4t354re4rt5r4twe5ter} 
r(t) & = \tfrac{1}{16}(3t+1)^{\frac12}(t^{-1}-1)^3\exp(A(t)) \notag \\
\parbox{0.1\linewidth}{\eqref{try465treer454rteret54tree54ree}} & <  
0.10604971915\cdot(t^{-1}-1)^3\exp(A(t)) \notag \\
 \parbox{0.1\linewidth}{\eqref{fert56terdew5454tr564t5er5}} & <  
0.10604971915\cdot 0.21223798483\cdot\exp(A(t)) \notag \\ 
 \parbox{0.1\linewidth}{\eqref{fre5t465terrt54t5654654tyr746ytr5t}} & <  
0.10604971915\cdot 0.21223798483\cdot 1.63180974778 \notag \\ 
& < 0.03672841266  \quad ,
\end{align}
proving the upper bound in \eqref{re54trwrr54terr3454etr4354rte}. 
\end{proof}

\begin{proofof}{Lemma~\ref{lem:approx}}
Since $\rho=r(t_0)$ by Definition~\ref{dfter54trrw54tt56356rt}, it is immediate from 
Lemmas~\ref{t54treert4re354tree34terfd} and \ref{ert54yrdfew4544r5etr3e4tewrfds} that 
\[ 0.03672841251 < \rho < 0.03672841266. \]
\noindent
Recall that $G(\rho) = \exp(C(\rho)) = \exp(\nu)$.
According to Lemma~\ref{54re4565rt43refd54ert5645344545terdfx545454} 
we have $-0.037439366735 < -\nu < -0.037439365283$, 
so \ref{w54tryfe54t565tr54trd}.\ref{fdgwr54yterew43rew435re543rds} 
in Lemma~\ref{rw5t4rswre54trt546tr54trde} is applicable and implies, 
by strict monotonicity of $\exp$, 
\[ \exp(-\nu) > 
1.0850694444\cdot 10^{-11} + \sum_{0\leq i\leq 5} \tfrac{(-0.037439366735)^i}{i!} > 0.96325282112 \]
and 
\[
\exp(-\nu)  < 
2.1701388889\cdot 10^{-11} + \sum_{0\leq i\leq 5} \tfrac{(-0.037439365283)^i}{i!} < 0.96325282254 \quad . \]
\end{proofof}

\bibliographystyle{plain}
\bibliography{ReferencesMSO_Tobias_Final}

\end{document}